\newcommand{\R}{\mathbb{R}}
\DeclareMathOperator{\dom}{dom}
\DeclareMathOperator{\ran}{ran}
\DeclareMathOperator{\gra}{gra}
\newtheorem{theorem}{Theorem}[section]
\newtheorem{lemma}[theorem]{Lemma}
\newtheorem{proposition}[theorem]{Proposition}
\newtheorem{corollary}[theorem]{Corollary}
\theoremstyle{definition}
\newtheorem{definition}[theorem]{Definition}
\newtheorem{example}[theorem]{Example}
\theoremstyle{remark}
\newtheorem{remark}[theorem]{Remark}
\newcommand{\tos}{\rightrightarrows} 
\newcommand{\ds}{\displaystyle}
\newcommand{\inner}[2]{\langle #1,#2 \rangle}
\title{On maximality of quasimonotone operators}
\author{Orestes Bueno
\thanks{Universidad del Pac\'ifico. Av. Salaverry 2020, Jes\'us Mar\'ia, Lima, Per\'u. Email: \texttt{\{o.buenotangoa, cotrina\_je\}@up.edu.pe}} 
\and John Cotrina\footnotemark[1]}
\DeclareMathOperator{\cone}{cone}
\DeclareMathOperator{\co}{co}
\DeclareMathOperator{\edom}{edom}
\begin{document}

\maketitle

\begin{abstract}
We introduce the notion of quasimonotone polar of a multivalued operator, 
in a similar way as the well-known monotone polar due to Martinez-Legaz and Svaiter.  
We first recover several properties similar to the monotone polar, including a characterization in terms of normal cones. 
Next, we use it to analyze certain aspects of maximal (in the sense of graph inclusion) quasimonotonicity, 
and its relation to the notion of maximal quasimonotonicity introduced by Aussel and Eberhard.
Furthermore, we study the connections between quasimonotonicity and Minty Variational Inequality Problems.

\bigskip

\noindent{\bf Keywords:} Quasimonotone operators, Maximal quasimonotone operators,  Quasimonotone Polarity, Minty Variational Inequality

\bigskip

\noindent{\bf MSC (2010):} 47H04, 47H05, 46B99, 49J53
\end{abstract}

\section{Introduction}

The monotone polar of a multivalued operator was introduced in 2005 by Martinez-Legaz and Svaiter~\cite{BSML}. Here, the authors studied the connection between such polar and representability and maximality of monotone operators, along with certain geometrical properties. 

The aim of this work is to extend the notion of monotone polar to the quasimonotone case. 
We show that the \emph{quasimonotone polar} shares the same properties with the usual monotone polar, 
and allows a characterization of quasimonotonicity, in a parallel way than in~\cite{BSML}.  
Moreover, we characterize the images of quasimonotone polar in terms of the normal cone of a certain set.

We also study the property of maximality of this kind of operators. 
We understand maximality in the sense of graph inclusion, which is the same sense as is usually 
addressed for the monotone case.  As expected, we show that the quasimonotone polar is useful to 
characterize maximal quasimonotonicity and also \emph{pre-maximal} quasimonotonicity, that is, 
operators which have an unique maximal quasimonotone extension. Moreover, we show the quasimonotone polar 
behaves well with the \emph{lateral translation} of an operator.

In a recent paper~\cite{Aussel2013-4}, Aussel and Eberhard introduced another notion of maximality 
for quasimonotone operators, which we call \emph{AE-maximality}. We study the relation between the 
graph inclusion maximality and the AE-maximality, proving that there are, in a natural sense, equivalent.  
In addition, we extend some results in~\cite{Aussel2013-4} and also simplify some proofs.

The Minty Variational Inequality Problem (MVIP) was first studied by Debrunner \& Flor \cite{Debrunner1964}, 
and then by Minty~\cite{minty1967}.
However, the first connection between the MVIP and generalized monotonicity was due to John~\cite{John2001}.
We conclude this work  studying the relation between the MVIP asociated to a multivalued operator and his quasimonotone polar. 
We show a new characterization of quasimonotonicity in terms of the relation between the solutions of the
MVIP associated to the operator and its polar. Moreover, when the operator is pre-maximal quasimonotone we obtain that 
the solution set of the associated MVIP must be at most, a singleton.

\section{Preliminary definitions and notations}
Let $U,V$ be non-empty sets. 
A \emph{multivalued operator} $T:U\tos V$ is an application $T:U\to \mathcal{P}(V)$, that is, for $u\in U$, $T(u)\subset V$. The \emph{graph}, \emph{domain} and \emph{range} of $T$ are defined, respectively, as
\begin{gather*}
\dom(T)=\big\{u\in U\::\: T(u)\neq\emptyset\big\},\qquad \ran(T)=\bigcup_{u\in U}T(u),\\ 
\text{and }\gra(T)=\big\{(u,v)\in U\times V\::\: v\in T(u)\big\}.
\end{gather*}
From now on, we will identify multivalued operators with their graphs, so we will write $(u,v)\in T$ instead of $(u,v)\in \gra(T)$.

Let $V$ be a real vector space, and let $A\subset V$. The \emph{convex hull} of $A$, denoted as $\co(A)$, is the smallest convex set (in the sense of inclusion) which contains $A$. The \emph{conic hull} of $A$ is the set
\[
\cone(A)=\{tv\in V\::\:t\geq 0,\, v\in A\}.
\]
Furthermore, given $T:U\tos V$, we denote $\cone(T):U\tos V$ as the operator defined by $\cone(T)(x)=\cone(T(x))$, for all $x\in\dom(T)$. In addition, the \emph{effective domain} of $T$ is the set $\edom(T)=\{u\in U\::\:T(u)\neq \{0\}\}$. Note that $T\subset \cone(T)$ and $\edom(\cone(T))=\edom(T)=\edom(T\cup(X\times \{0\}))$.

Let $X$ be a Banach space and $X^*$ be its topological dual. 
The \emph{duality product} $\pi:X\times X^*\to\R$ is defined as $\pi(x,x^*)=\inner{x}{x^*}=x^*(x)$.
Let $C\subset X$ be a non-empty set. The \emph{normal cone} $N_C$ of $C$ at $x$ is defined as
\[
N_{C}(x)=\{x^*\in X^*\::\: \inner{y-x}{x^*}\leq 0,\,\forall\, y\in C\}.
\]
It is common in this definition to impose $C$ convex and $x\in C$, however, we won't consider such conditions in this work.

The \emph{monotone polar}~\cite{BSML} of an operator $T:X\tos X^*$ is the operator $T^{\mu}$ defined as
\[
T^{\mu}=\{(x,x^*)\in X\times X^*\::\: \inner{x-y}{x^*-y^*}\geq 0,\,\forall\,(y,y^*)\in T\}.
\]
The notions of \emph{monotonicity}, \emph{maximal} and \emph{pre-maximal} monotonicity can be expressed in term of the monotone polar. More precisely: 
\begin{enumerate}
\item $T$ is monotone if, and only if, $T\subset T^{\mu}$;
\item $T$ is maximal monotone if, and only if, $T=T^{\mu}$;
\item $T$ is pre-maximal monotone if, and only if, $T$ and $T^{\mu}$ are monotone.
\end{enumerate}

\section{The quasimonotone polar}
From now on $X$ will be a real Banach space and $X^*$ is its topological dual. Given $(x,x^*),(y,y^*)\in X\times X^*$, we say that $(x,x^*)$ is in a \emph{quasimonotone relation} with $(y,y^*)$, denoted by $(x,x^*)\sim_q(y,y^*)$, if 
\[
 \min\{\inner{y-x}{x^*},\inner{x-y}{y^*}\}\leq 0.
\]
Observe that $\sim_q$ is a reflexive and symmetric relation over $X\times X^*$.

\begin{definition}
The \emph{quasimonotone polar} of $T:X\tos X^*$, $T^{\nu}$ is given by
\[
 T^{\nu}=\{(x,x^*)\in X\times X^*\::\:(x,x)\sim_q(y,y^*),\,\forall\,(y,y^*)\in T\}.
\]
\end{definition}
Is easy to see that $T^{\mu}\subset T^{\nu}$, for every operator $T:X\tos X^*$.

Our notions of quasimonotone relation and polar are parallel to the monotone ones. They also were addressed by Aussel and Eberhard in~\cite{Aussel2013-4}. In particular, the quasimonotone polar $T^{\nu}$ was denoted as $QMR_T$ in~\cite{Aussel2013-4}.

\begin{example}\label{ejm:X0}
For $x\in X$, $\{(x,0)\}^{\nu}=X\times X^*$. Moreover, is straightforward to verify that $(X\times\{0\})^{\nu}=X\times X^*$.
\end{example}

Similarly to the monotone polar, the map $T\mapsto T^{\nu}$ is a \emph{polarity}~\cite{Birk79}. Thus, the following properties hold.
\begin{proposition}\label{pro:qpolar}\quad
\begin{enumerate}
\item $\ds\left(\bigcup_{i\in I}A_i\right)^{\nu}=\bigcap_{i\in I}A_i^{\nu}$.
\item $T\subset T^{\nu\nu}$.
\item $T^{\nu\nu\nu}=T^{\nu}$.
\item If $T\subset S$ then $S^{\nu}\subset T^{\nu}$.
\end{enumerate}
\end{proposition}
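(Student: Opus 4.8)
The plan is to treat all four items as consequences of a single structural fact: $\sim_q$ is a symmetric relation on the set $X\times X^*$, and $T\mapsto T^{\nu}$ is exactly the polarity it induces. None of the four arguments uses the particular form of $\sim_q$; only symmetry is needed (and, for item (2), not even reflexivity). I would first dispatch items (1) and (4) by unwinding definitions. For (4): if $T\subset S$ and $(x,x^*)\in S^{\nu}$, then $(x,x^*)\sim_q(y,y^*)$ for every $(y,y^*)\in S$, hence in particular for every $(y,y^*)\in T$, so $(x,x^*)\in T^{\nu}$; thus $T\mapsto T^{\nu}$ is antitone. For (1): a pair $(x,x^*)$ lies in $\left(\bigcup_{i\in I}A_i\right)^{\nu}$ precisely when it is in quasimonotone relation with every element of $\bigcup_{i\in I}A_i$, which, by the meaning of the union, is equivalent to being in quasimonotone relation with every element of each $A_i$ — that is, to lying in $A_i^{\nu}$ for every $i\in I$, i.e.\ in $\bigcap_{i\in I}A_i^{\nu}$.

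Next I would prove (2), which is the only place an actual property of $\sim_q$ intervenes. Fix $(x,x^*)\in T$. To show $(x,x^*)\in T^{\nu\nu}$ I must check that $(x,x^*)\sim_q(y,y^*)$ for every $(y,y^*)\in T^{\nu}$. By the definition of $T^{\nu}$, such a $(y,y^*)$ satisfies $(y,y^*)\sim_q(z,z^*)$ for all $(z,z^*)\in T$; applying this with $(z,z^*)=(x,x^*)\in T$ gives $(y,y^*)\sim_q(x,x^*)$, and the symmetry of $\sim_q$ yields $(x,x^*)\sim_q(y,y^*)$, as required.

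Finally, (3) follows formally from (2) and (4) with no further computation. Applying (2) to the operator $T^{\nu}$ gives $T^{\nu}\subset(T^{\nu})^{\nu\nu}=T^{\nu\nu\nu}$. For the reverse inclusion, (2) gives $T\subset T^{\nu\nu}$, and feeding this into the antitonicity statement (4) yields $(T^{\nu\nu})^{\nu}\subset T^{\nu}$, that is, $T^{\nu\nu\nu}\subset T^{\nu}$. Combining the two inclusions gives $T^{\nu\nu\nu}=T^{\nu}$.

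I do not expect any genuine obstacle here: this is the standard package of properties of the Galois connection attached to a symmetric relation, and the proof is entirely formal once one notices the symmetry of $\sim_q$. Indeed one could simply cite the abstract theory of polarities, exactly as in the sentence preceding the statement; I would nonetheless include the short direct arguments above for self-containedness, since they occupy only a few lines.
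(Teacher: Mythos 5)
Your proposal is correct and matches the paper's approach: the paper gives no explicit proof, stating the four properties as the standard consequences of the fact that $T\mapsto T^{\nu}$ is the polarity induced by the symmetric relation $\sim_q$ (citing Birkhoff), and your direct arguments — (1) and (4) by unwinding definitions, (2) from symmetry, (3) formally from (2) and (4) — are exactly the derivation underlying that citation.
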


\begin{lemma}\label{lem:qcone}
Let $T:X\tos X^*$ be a multivalued operator and let $(x,x^*)\in T^{\nu}$, $(y,y^*)\in T$ and $t,s\geq 0$. Then $(x,tx^*)\sim_q(y,sy^*)$.
\end{lemma}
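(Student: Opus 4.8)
The plan is simply to unwind both definitions and then split into cases according to whether the scalars $t,s$ vanish. By definition of $T^{\nu}$, the hypothesis $(x,x^*)\in T^{\nu}$ applied at the given point $(y,y^*)\in T$ says precisely that $\min\{\inner{y-x}{x^*},\inner{x-y}{y^*}\}\leq 0$. On the other hand, the conclusion $(x,tx^*)\sim_q(y,sy^*)$ to be proved is exactly the inequality $\min\{t\inner{y-x}{x^*},\,s\inner{x-y}{y^*}\}\leq 0$. So everything reduces to showing that nonnegative rescaling of the two bracketed quantities cannot destroy the property that at least one of them is $\leq 0$.

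First I would dispose of the degenerate situation: if $t=0$ or $s=0$, then the corresponding term $t\inner{y-x}{x^*}$ or $s\inner{x-y}{y^*}$ equals $0$, so the minimum is $\leq 0$ and $(x,tx^*)\sim_q(y,sy^*)$ holds trivially. Then, assuming $t>0$ and $s>0$, I would invoke the hypothesis: either $\inner{y-x}{x^*}\leq 0$, in which case multiplying by $t>0$ gives $t\inner{y-x}{x^*}\leq 0$; or $\inner{x-y}{y^*}\leq 0$, in which case multiplying by $s>0$ gives $s\inner{x-y}{y^*}\leq 0$. In either case $\min\{t\inner{y-x}{x^*},\,s\inner{x-y}{y^*}\}\leq 0$, which is the claim.

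I do not expect any genuine obstacle in this argument; it is a one-line case distinction resting on the fact that multiplication by a nonnegative scalar is sign-preserving (and sends anything to $0$ when the scalar is $0$). The only point worth a moment's attention is precisely that the scalars are allowed to be nonnegative rather than strictly positive, which is why the $t=0$ or $s=0$ case is acknowledged separately rather than folded into the main case.
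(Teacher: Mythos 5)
Your proof is correct and follows essentially the same route as the paper: both arguments observe that multiplying by nonnegative scalars preserves the nonpositivity of at least one of the two pairings, hence preserves the quasimonotone relation. If anything, your explicit treatment of the case $t=0$ or $s=0$ is slightly more careful than the paper's appeal to ``the signs remain the same.''
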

\begin{proof}
Note that the signs of $\inner{x-y}{sy^*}$ and $\inner{y-x}{tx^*}$ are the same as $\inner{x-y}{y^*}$ and $\inner{y-x}{x^*}$, respectively. Therefore, the sign of $\min\{\inner{x-y}{sy^*},\inner{y-x}{tx^*}\}$ also remains the same and the lemma follows.
\end{proof}

\newcounter{itemp}
\begin{proposition}\label{pro:qpolarpro} Let $T,S:X\tos X^*$ be multivalued operators. Then the following assertions hold.
\begin{enumerate}
\item $X\times\{0\}\subset T^{\nu}$.
\item If $S\subset X\times\{0\}$ then $(T\cup S)^{\nu}=T^{\nu}$. 
\item $\cone(T)^{\nu}=T^{\nu}=\cone(T^{\nu})$. In particular $T^{\nu}(x)$ is a cone, for all $x\in X$.
\item For all $x\in X$, $N_{\co\edom(T)}(x)\subset T^{\nu}(x)$.
\end{enumerate}
\end{proposition}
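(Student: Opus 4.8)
The plan is to handle the four items roughly in the order stated, since each is either a short computation or reduces to an earlier result. For item (1), I would simply observe that for any $(x,0) \in X\times\{0\}$ and any $(y,y^*)\in T$ we have $\inner{y-x}{0} = 0 \le 0$, so the minimum defining $\sim_q$ is $\le 0$ automatically; hence $(x,0)\in T^\nu$. This is essentially Example~\ref{ejm:X0} read the other way around. For item (2), suppose $S\subset X\times\{0\}$. Using Proposition~\ref{pro:qpolar}(1) we get $(T\cup S)^\nu = T^\nu \cap S^\nu$, and since $S\subset X\times\{0\}$ we have, by Proposition~\ref{pro:qpolar}(4) and Example~\ref{ejm:X0}, $S^\nu \supset (X\times\{0\})^\nu = X\times X^*$, so $S^\nu = X\times X^*$ and the intersection collapses to $T^\nu$.

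For item (3), the inclusions $T\subset\cone(T)$ and $T\subset\cone(T^\nu)$ (the latter because $\cone$ is extensive and, by item (1) or Proposition~\ref{pro:qpolar}(2), $T^\nu$ is nonempty in the relevant sense) give, via antitonicity (Proposition~\ref{pro:qpolar}(4)), one direction of each equality; namely $\cone(T)^\nu\subset T^\nu$ and $\cone(T^\nu)\supset T^\nu$ is trivial. For the reverse inclusions I would invoke Lemma~\ref{lem:qcone}: if $(x,x^*)\in T^\nu$ and $(y,sy^*)\in\cone(T)$ with $s\ge 0$, $(y,y^*)\in T$, then $(x,x^*)\sim_q(y,sy^*)$ by the lemma (taking $t=1$), so $(x,x^*)\in\cone(T)^\nu$; this shows $T^\nu\subset\cone(T)^\nu$. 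Similarly, if $(x,tx^*)\in\cone(T^\nu)$ with $(x,x^*)\in T^\nu$, $t\ge 0$, then for every $(y,y^*)\in T$ Lemma~\ref{lem:qcone} (with $s=1$) gives $(x,tx^*)\sim_q(y,y^*)$, so $(x,tx^*)\in T^\nu$, i.e.\ $\cone(T^\nu)\subset T^\nu$. Combining yields $\cone(T)^\nu = T^\nu = \cone(T^\nu)$; the ``in particular'' is immediate since $\cone(T^\nu)(x) = \cone(T^\nu(x))$ is a cone by definition.

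Item (4) is the one I expect to require the most care. Fix $x\in X$ and take $x^* \in N_{\co\edom(T)}(x)$, so $\inner{y-x}{x^*}\le 0$ for all $y\in\co\edom(T)$, in particular for all $y\in\edom(T)$. I need to show $(x,x^*)\in T^\nu$, i.e.\ for every $(y,y^*)\in T$ that $\min\{\inner{y-x}{x^*},\inner{x-y}{y^*}\}\le 0$. If $y\in\edom(T)$ then $\inner{y-x}{x^*}\le 0$ and the first term already does the job. The remaining case is $y\notin\edom(T)$, which by definition of $\edom$ means $T(y)=\{0\}$, so the only $y^*$ with $(y,y^*)\in T$ is $y^*=0$, giving $\inner{x-y}{y^*}=0\le 0$; the second term does the job. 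So in all cases the minimum is $\le 0$, proving $x^*\in T^\nu(x)$. The only subtlety is making sure the $\edom$ bookkeeping is clean — one should note that whether $x\in\co\edom(T)$ or not is irrelevant since the normal cone here is defined without requiring $x\in C$, exactly as the paper emphasizes — and then the argument goes through without needing convexity or any separation theorem.
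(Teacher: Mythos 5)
Your proof is correct and follows essentially the same route as the paper: item (1) by the trivial computation behind Example~\ref{ejm:X0}, item (2) via Proposition~\ref{pro:qpolar}, item (3) by antitonicity plus Lemma~\ref{lem:qcone}, and item (4) by splitting according to whether $y\in\edom(T)$, which is just the mirror image of the paper's split on $y^*=0$ versus $y^*\neq 0$. The only blemish is a slip of wording in item (3), where the inclusion you need (and in fact use) is the trivial $T^{\nu}\subset\cone(T^{\nu})$ rather than ``$T\subset\cone(T^{\nu})$''; the argument itself is fine.
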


\begin{proof}
\begin{enumerate}
 \item By Example~\ref{ejm:X0}, given $x\in X$, $(x,0)$ is quasimonotonically related to any point in $X\times X^*$ and, 
 in particular, with any point in $T$. 
 \item Since $S\subset X\times\{0\}$, then $X\times X^*=(X\times\{0\})^{\nu}\subset S^{\nu}$. Therefore, using item {\it 1.} of Proposition~\ref{pro:qpolar}, $(T\cup S)^{\nu}=T^{\nu}\cap S^{\nu}=T^{\nu}\cap X\times X^*=T^{\nu}$.
 \item Since $T\subset\cone(T)$, $\cone(T)^{\nu}\subset T^{\nu}\subset\cone(T^{\nu})$. The converse inclusions follow from 
Lemma~\ref{lem:qcone}.
\item Denote $C=\co\edom(T)$. 
Let $x\in X$, take $x^*\in N_C(x)$ and pick any $(y,y^*)\in T$. If $y^*=0$ then, clearly, $(x,x^*)$ and $(y,y^*)$ are quasimonotonically related. If $y^*\neq 0$ then $y\in\edom(T)\subset C$ and, since $x^*\in N_C(x)$
\[
\inner{y-x}{x^*}\leq 0,
\]
that is, $(x,x^*)\sim_q(y,y^*)$. This implies $x^*\in T^{\nu}(x)$.\qedhere
\end{enumerate}
\end{proof}

Let $T:X\tos X^*$ be a multivalued operator. For $x\in X$, we define the set
\[
V_T(x)=\{y\in X\::\:\exists y^*\in T(y)\text{ such that }\inner{x-y}{y^*}>0\}.
\]

Clearly, given $T,S:X\tos X^*$ such that $T\subset S$, then $V_T(x)\subset V_S(x)$, for all $x\in X$
We now relate this set with the quasimonotone polar $T^{\nu}$.
\begin{lemma}\label{lem:VTempty}
	Let $T:X\tos X^*$. Then $V_T(x)=\emptyset$ if, and only if, $T^{\nu}(x)=X^*$.
\end{lemma}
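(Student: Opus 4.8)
The plan is to prove the two implications separately, working directly from the definitions of $V_T(x)$ and $T^\nu(x)$.

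First I would unwind what $T^\nu(x) = X^*$ means. By definition, $T^\nu(x) = X^*$ iff for every $x^* \in X^*$ and every $(y,y^*) \in T$ we have $(x,x^*) \sim_q (y,y^*)$, i.e.\ $\min\{\inner{y-x}{x^*}, \inner{x-y}{y^*}\} \leq 0$. The point is that $x^*$ ranges over all of $X^*$, so the term $\inner{y-x}{x^*}$ is under no control at all — for a fixed $y \neq x$ it can be made strictly positive by choosing $x^*$ appropriately. Hence the only way the minimum can be forced $\leq 0$ for \emph{every} $x^*$ is if the other term satisfies $\inner{x-y}{y^*} \leq 0$.

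For the implication ``$V_T(x) = \emptyset \implies T^\nu(x) = X^*$'': assume $V_T(x) = \emptyset$, so for every $y \in X$ and every $y^* \in T(y)$ we have $\inner{x-y}{y^*} \leq 0$. Then for any $x^* \in X^*$ and any $(y,y^*) \in T$, the second entry of the pair $\{\inner{y-x}{x^*}, \inner{x-y}{y^*}\}$ is already $\leq 0$, so the minimum is $\leq 0$ and $(x,x^*) \sim_q (y,y^*)$. Since $x^*$ was arbitrary, $x^* \in T^\nu(x)$ for all $x^*$, i.e.\ $T^\nu(x) = X^*$.

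For the converse ``$T^\nu(x) = X^* \implies V_T(x) = \emptyset$'': I would argue by contraposition. Suppose $V_T(x) \neq \emptyset$, so there is some $y \in X$ and $y^* \in T(y)$ with $\inner{x-y}{y^*} > 0$; in particular $x \neq y$, so there exists $x^* \in X^*$ with $\inner{y-x}{x^*} > 0$ (pick any continuous functional separating/witnessing the nonzero vector $y - x$, e.g.\ via Hahn--Banach, and scale it). Then $\min\{\inner{y-x}{x^*}, \inner{x-y}{y^*}\} > 0$, so $(x,x^*) \not\sim_q (y,y^*)$, hence $x^* \notin T^\nu(x)$ and $T^\nu(x) \neq X^*$. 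There is no real obstacle here; the only mild point to be careful about is invoking the existence of a functional $x^*$ with $\inner{y-x}{x^*} > 0$ when $y \neq x$, which is immediate from Hahn--Banach (or simply from the fact that $X^*$ separates points of the Banach space $X$), together with the homogeneity already recorded in Lemma~\ref{lem:qcone} if one wants to rescale.
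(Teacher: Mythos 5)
Your proof is correct and follows essentially the same route as the paper: the forward implication directly from the definition of $V_T(x)$, and the converse by contraposition, using Hahn--Banach to produce $x^*$ with $\inner{y-x}{x^*}>0$ once $\inner{x-y}{y^*}>0$ forces $y\neq x$.
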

\begin{proof}
	Assume that $V_T(x)=\emptyset$ and take any $x^*\in X^*$. Given $(y,y^*)\in T$, our assumption implies that $y\notin V_T(x)$, therefore $\inner{x-y}{y^*}\leq 0$. This in turn implies $(x,x^*)\sim_q(y,y^*)$, that is, $x^*\in T^{\nu}(x)$, since $(y,y^*)\in T$ was arbitrary.
	
	Reciprocally, suppose $V_T(x)\neq \emptyset$. Hence, there exists $(y,y^*)\in T$ such that $\inner{x-y}{y^*}>0$ and, therefore, $y\neq x$. A direct consequence of Hahn-Banach theorem allows us to obtain $x^*\in X^*$ such that $\inner{y-x}{x^*}>0$, hence $x^*\notin T^{\nu}(x)$. The lemma follows.
\end{proof}

\begin{proposition}\label{prop:Tnuequiv}
	Let $T:X\tos X^*$ and let $x\in X$ such that $V_T(x)\neq \emptyset$. Then $T^{\nu}(x)=N_{V_T(x)}(x)$.	
\end{proposition}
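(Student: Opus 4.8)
The plan is to establish the two inclusions $T^{\nu}(x)\subset N_{V_T(x)}(x)$ and $N_{V_T(x)}(x)\subset T^{\nu}(x)$ separately, reading everything directly off the definitions; the only device needed is a sign dichotomy for the quantity $\inner{x-y}{y^*}$ that appears in the quasimonotone relation.

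First I would prove $T^{\nu}(x)\subset N_{V_T(x)}(x)$. Fix $x^*\in T^{\nu}(x)$ and an arbitrary $y\in V_T(x)$; by definition of $V_T(x)$ there is $y^*\in T(y)$ with $\inner{x-y}{y^*}>0$. Since $(x,x^*)\sim_q(y,y^*)$, the minimum $\min\{\inner{y-x}{x^*},\inner{x-y}{y^*}\}$ is nonpositive, and as its second argument is strictly positive the first must satisfy $\inner{y-x}{x^*}\le 0$. Because $y\in V_T(x)$ was arbitrary, this is precisely the statement $x^*\in N_{V_T(x)}(x)$.

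Next I would prove the reverse inclusion $N_{V_T(x)}(x)\subset T^{\nu}(x)$. Fix $x^*\in N_{V_T(x)}(x)$ and any $(y,y^*)\in T$; the goal is $\min\{\inner{y-x}{x^*},\inner{x-y}{y^*}\}\le 0$. If $\inner{x-y}{y^*}\le 0$ there is nothing to prove. Otherwise $\inner{x-y}{y^*}>0$, so $y\in V_T(x)$ (witnessed by this very $y^*$), and then the defining inequality of the normal cone gives $\inner{y-x}{x^*}\le 0$; in either case the minimum is nonpositive, so $(x,x^*)\sim_q(y,y^*)$. Since $(y,y^*)\in T$ was arbitrary, $x^*\in T^{\nu}(x)$.

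I do not expect a genuine obstacle: the proposition is essentially a bookkeeping reformulation of the definitions, and the case split above is all that is required. The one point worth remarking is that the hypothesis $V_T(x)\neq\emptyset$ is never actually used in the argument — when $V_T(x)=\emptyset$ both sides reduce to $X^*$ (the left side by Lemma~\ref{lem:VTempty}, the right side because the normal cone of the empty set is all of $X^*$) — so Proposition~\ref{prop:Tnuequiv} together with Lemma~\ref{lem:VTempty} yields a complete description of $T^{\nu}(x)$ in every case.
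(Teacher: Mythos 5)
Your proof is correct and follows essentially the same route as the paper: both directions are read off the definitions, with the paper splitting on whether $y\in V_T(x)$ while you split on the sign of $\inner{x-y}{y^*}$, which is the same dichotomy. Your closing remark is also fine, with the caveat that the paper only defines the normal cone $N_C(x)$ for non-empty $C$, which is exactly why the hypothesis $V_T(x)\neq\emptyset$ appears in the statement and the case $V_T(x)=\emptyset$ is delegated to Lemma~\ref{lem:VTempty}.
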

\begin{proof}
	Let $x^*\in N_{V_T(x)}(x)$ and $(y,y^*)\in T$. If $y\notin V_T(x)$ then $\inner{x-y}{y^*}\leq 0$ and $(x,x^*)\sim_q(y,y^*)$. On the other hand, if $y\in V_T(x)$, then $\inner{y-x}{x^*}\leq 0$ and, again,  $(x,x^*)\sim_q(y,y^*)$. Altogether, we conclude $x^*\in T^{\nu}(x)$.  
	
	Conversely, let $x^*\in T^{\nu}(x)$ and take an arbitrary $y\in V_T(x)$. Then, there exists $y^*\in T(y)$ such that $\inner{x-y}{y^*}>0$. This, along with $(x,x^*)\sim_q (y,y^*)$, implies $\inner{y-x}{x^*}\leq 0$, that is, $x^*\in N_{V_T(x)}(x)$.
\end{proof}

As a direct consequence from Proposition~\ref{prop:Tnuequiv}, we obtain the following corollary.
\begin{corollary}
Let $T:X\tos X^*$ be a multivalued operator and $x\in X$. Then $T^{\nu}(x)$ is a weak$^*$-closed, conic and convex set.
\end{corollary}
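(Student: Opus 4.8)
The plan is to split the argument into two cases depending on whether $V_T(x)$ is empty, so that we may appeal to Lemma~\ref{lem:VTempty} in one case and Proposition~\ref{prop:Tnuequiv} in the other. In both cases the goal is to identify $T^{\nu}(x)$ with a normal cone $N_C(x)$ for a suitable set $C$, and then observe that every normal cone has the three asserted properties. So the real content reduces to a single elementary fact: for any nonempty $C\subseteq X$ and any $x\in X$, the set $N_C(x)=\{x^*\in X^*:\inner{y-x}{x^*}\le 0,\ \forall y\in C\}$ is weak$^*$-closed, conic, and convex.

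First I would dispose of the case $V_T(x)=\emptyset$. By Lemma~\ref{lem:VTempty}, this means $T^{\nu}(x)=X^*$, which is trivially weak$^*$-closed, conic and convex, so there is nothing to prove. Next, in the case $V_T(x)\neq\emptyset$, Proposition~\ref{prop:Tnuequiv} gives $T^{\nu}(x)=N_{V_T(x)}(x)$, so it suffices to establish the elementary fact above with $C=V_T(x)$.

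For the elementary fact, I would write $N_C(x)=\bigcap_{y\in C}\{x^*\in X^*:\inner{y-x}{x^*}\le 0\}$. Each set in this intersection is a closed half-space in the weak$^*$ topology — it is the preimage of $(-\infty,0]$ under the weak$^*$-continuous linear functional $x^*\mapsto\inner{y-x}{x^*}$ — hence weak$^*$-closed and convex, and it is clearly a cone since multiplying $x^*$ by a nonnegative scalar preserves the sign of $\inner{y-x}{x^*}$. An arbitrary intersection of weak$^*$-closed sets is weak$^*$-closed, an arbitrary intersection of convex sets is convex, and an arbitrary intersection of cones is a cone; therefore $N_C(x)$ inherits all three properties. (Alternatively, one could simply note that $T^{\nu}(x)$ is conic directly from item~3 of Proposition~\ref{pro:qpolarpro}, and handle only weak$^*$-closedness and convexity via the half-space description.)

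I do not anticipate a genuine obstacle here; the only point requiring a modicum of care is confirming that $x^*\mapsto\inner{y-x}{x^*}$ is weak$^*$-continuous, which is immediate since this is, by definition of the weak$^*$ topology, exactly the kind of evaluation functional that topology is built to make continuous. If one wanted to avoid even mentioning the weak$^*$ topology explicitly, the half-space description together with the Banach–Alaoglu-flavoured observation that these are the defining closed sets of $\sigma(X^*,X)$ suffices.
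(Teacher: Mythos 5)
Your proof is correct and follows exactly the route the paper intends: the corollary is stated there as a direct consequence of Proposition~\ref{prop:Tnuequiv} (together with Lemma~\ref{lem:VTempty} for the case $V_T(x)=\emptyset$), with $T^{\nu}(x)$ identified as either $X^*$ or a normal cone, which is an intersection of weak$^*$-closed convex conic half-spaces. The only cosmetic quibble is that the weak$^*$-continuity of $x^*\mapsto\inner{y-x}{x^*}$ is purely definitional and has nothing to do with Banach--Alaoglu, as you yourself essentially note.
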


\begin{example}\label{ejm:ident}
	Let $H$ be a Hilbert space and consider $I:H\to H$, $I(x)=x$, the identity operator, which is maximal monotone.  
	For $x\in H$, $x\neq 0$, consider the set
	\[
	V_I(x)=\{y\in H\::\:\inner{x-y}{y}>0\}.
	\]
	It is straightforward to prove that $V_I(x)=\dfrac{1}{2}B(x,\|x\|)$ is the open ball with center $x/2$ and radius $\|x\|/2$. Thus, by Proposition~\ref{prop:Tnuequiv}, if $x\neq 0$,
	\[
	I^{\nu}(x)=N_{V_I(x)}(x)=\cone(\{x\}).
	\]
	On the other hand, $V_I(0)=\emptyset$ so $I^{\nu}(0)=H$, and we obtain, in conclusion,
	\[
	I^{\nu}=\cone(I)\cup (\{0\}\times H).
	\]
\end{example}

\section{On quasimonotone operators}

Recall that an operator $T:X\tos X^*$ is \emph{quasimonotone} if, for every $(x,x^*),(y,y^*)\in T$,
\[
\inner{y-x}{x^*}>0\quad\Longrightarrow\quad \inner{y-x}{y^*}\geq 0,
\]
or, equivalently, every $(x,x^*),(y,y^*)\in T$  are quasimonotonically related, that is,
\[
\min\{\inner{y-x}{x^*},\inner{x-y}{y^*}\}\leq 0.
\]

Follows directly from the definition that if $T\subset S$ and $S$ is quasimonotone, then $T$ must be quasimonotone as well.

\begin{proposition} Let $T:X\tos X^*$. The following conditions are equivalent:
\begin{enumerate}
 \item $T$ is quasimonotone,
 \item $T\subset T^{\nu}$,
 \item $T^{\nu\nu}\subset T^{\nu}$,
 \item $T^{\nu\nu}$ is quasimonotone.
 \item $\cone(T)$ is quasimonotone.
 \item $\cone(T)\cup(X\times\{0\})$ is quasimonotone.
\end{enumerate}
\end{proposition}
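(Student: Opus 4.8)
The plan is to establish the equivalences by a cyclic chain together with a few direct implications, leaning heavily on the polarity properties in Proposition~\ref{pro:qpolar} and the cone-invariance of the polar in Proposition~\ref{pro:qpolarpro}. Recall that for any operator, quasimonotonicity is by definition the statement that every pair of points in the graph is quasimonotonically related, which is exactly the condition $T\subset T^{\nu}$; so the equivalence of \emph{1.} and \emph{2.} is immediate from unwinding the definitions, with no real content. This anchors the argument: everything else is phrased in terms of the inclusion $T\subset T^{\nu}$.

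Next I would prove $2.\Rightarrow 3.\Rightarrow 4.\Rightarrow 2.$ For $2.\Rightarrow 3.$: if $T\subset T^{\nu}$, apply the antitone property (item~\emph{4.} of Proposition~\ref{pro:qpolar}) to get $T^{\nu\nu}\subset T^{\nu}$. Wait — antitonicity reverses inclusions, so from $T\subset T^{\nu}$ we get $(T^{\nu})^{\nu}\subset T^{\nu}$, i.e. $T^{\nu\nu}\subset T^{\nu}$; that is exactly \emph{3.} For $3.\Rightarrow 4.$: from $T^{\nu\nu}\subset T^{\nu}$ and $T^{\nu\nu\nu}=T^{\nu}$ (item~\emph{3.} of Proposition~\ref{pro:qpolar}), apply the polar to the inclusion $T^{\nu\nu}\subset T^{\nu}$ to obtain $T^{\nu\nu}=T^{\nu\nu\nu\nu}\subset T^{\nu\nu\nu}=T^{\nu}$... the cleaner route is: $3.$ says $T^{\nu\nu}\subset T^{\nu}=T^{\nu\nu\nu}=(T^{\nu\nu})^{\nu}$, which is precisely the statement that $T^{\nu\nu}$ is quasimonotone by the already-proven equivalence of \emph{1.} and \emph{2.} applied to the operator $T^{\nu\nu}$; so \emph{4.} holds. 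For $4.\Rightarrow 2.$: if $T^{\nu\nu}$ is quasimonotone then $T^{\nu\nu}\subset (T^{\nu\nu})^{\nu}=T^{\nu\nu\nu}=T^{\nu}$; combining with $T\subset T^{\nu\nu}$ (item~\emph{2.} of Proposition~\ref{pro:qpolar}) gives $T\subset T^{\nu}$, which is \emph{2.}

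Finally I would close the loop with the cone statements. The key input is Proposition~\ref{pro:qpolarpro}\emph{(3.)}, which gives $\cone(T)^{\nu}=T^{\nu}$, and similarly $(\cone(T)\cup(X\times\{0\}))^{\nu}=T^{\nu}$ using item~\emph{2.} of Proposition~\ref{pro:qpolarpro} (since $X\times\{0\}\subset X\times\{0\}$ trivially, the added piece is absorbed). For $1.\Rightarrow 5.$: if $T$ is quasimonotone, then $\cone(T)\subset\cone(T^{\nu})=T^{\nu}=\cone(T)^{\nu}$, so $\cone(T)$ is quasimonotone by the \emph{1.}$\Leftrightarrow$\emph{2.} equivalence applied to $\cone(T)$. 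The implication $5.\Rightarrow 1.$ is trivial since $T\subset\cone(T)$ and subsets of quasimonotone operators are quasimonotone, as noted just before the proposition. The equivalence $5.\Leftrightarrow 6.$ runs the same way: $\cone(T)\subset\cone(T)\cup(X\times\{0\})$ gives $6.\Rightarrow 5.$, and for $5.\Rightarrow 6.$ observe $\cone(T)\cup(X\times\{0\})\subset (\cone(T)\cup(X\times\{0\}))^{\nu}=\cone(T)^{\nu}=T^{\nu}$, using that $X\times\{0\}\subset T^{\nu}$ always (Proposition~\ref{pro:qpolarpro}\emph{(1.)}) and $\cone(T)\subset T^{\nu}$ from \emph{5.} There is no genuine obstacle here; the only thing to be careful about is the \emph{direction} of the polarity (antitone, not monotone) and keeping track of the three-fold identity $T^{\nu\nu\nu}=T^{\nu}$, which is what makes all the double- and triple-polar manipulations collapse correctly.
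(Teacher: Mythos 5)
Your proposal is correct and uses essentially the same ingredients and strategy as the paper: items \emph{1}--\emph{4} via the polarity properties of Proposition~\ref{pro:qpolar} (the paper leaves this as a direct consequence of the definitions), and the cone statements via Proposition~\ref{pro:qpolarpro}, with only a cosmetic difference in routing (you close the cone loop through \emph{5}$\Rightarrow$\emph{6}, the paper through \emph{2}$\Rightarrow$\emph{6}). Nothing to correct.
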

\begin{proof}
Equivalence between items {\it 1, 2, 3} and {\it 4} is a direct consequence of the definition of quasimonotone polar, 
and is analogous to~\cite[Proposition 21]{BSML}. On the other hand, since $T\subset\cone(T)\subset\cone(T)\cup(X\times\{0\})$ then {\it 6} implies {\it 5} which in turn implies {\it 1}. Finally, item {\it 2} implies $\cone(T)\subset\cone(T^{\nu})$, hence, using Proposition~\ref{pro:qpolarpro},
\[
\cone(T)\cup(X\times\{0\})\subset \cone(T^{\nu})\cup(X\times\{0\})=T^{\nu}=(\cone(T)\cup(X\times\{0\})^{\nu}.
\]
That is, item {\it 6}.
\end{proof}

The following corollary completes Lemma~4 in~\cite{Aussel2013-4}, and is analogous to~\cite[Proposition 19]{BSML}.
\begin{corollary}\label{123}
	Let $T:X\tos X^*$ be a quasimonotone operator. Then $(x,x^*)\in T^\nu$ if, and only if, $T\cup\{(x,x^*)\}$ is quasimonotone.
\end{corollary}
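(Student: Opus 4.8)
The plan is to unwind both implications directly from the definitions of the quasimonotone polar and of quasimonotonicity, exactly as in the monotone analogue \cite[Proposition 19]{BSML}. The only structural facts needed are that $\sim_q$ is reflexive and symmetric (as noted right after its definition) and that $T$ itself is quasimonotone.

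For the ``only if'' direction, I would assume $(x,x^*)\in T^{\nu}$ and take two arbitrary elements $(u,u^*),(v,v^*)$ of $T\cup\{(x,x^*)\}$. If both lie in $T$, they are quasimonotonically related because $T$ is quasimonotone. If both equal $(x,x^*)$, they are related by reflexivity of $\sim_q$. In the remaining case one of them is $(x,x^*)$ and the other is some $(y,y^*)\in T$; then $(x,x^*)\sim_q(y,y^*)$ by the very definition of $T^{\nu}$, and symmetry of $\sim_q$ takes care of the ordering. Hence every pair of points of $T\cup\{(x,x^*)\}$ is quasimonotonically related, i.e.\ $T\cup\{(x,x^*)\}$ is quasimonotone.

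For the ``if'' direction, I would assume $T\cup\{(x,x^*)\}$ is quasimonotone and fix any $(y,y^*)\in T$. Since both $(x,x^*)$ and $(y,y^*)$ belong to $T\cup\{(x,x^*)\}$, they are quasimonotonically related, that is, $(x,x^*)\sim_q(y,y^*)$. As $(y,y^*)\in T$ was arbitrary, this is exactly the statement that $(x,x^*)\in T^{\nu}$.

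Since each implication reduces to a one-line check against the definitions, I do not expect a genuine obstacle here; the only points requiring care are to actually invoke the quasimonotonicity of $T$ (needed to dispatch the pairs already lying inside $T$ in the ``only if'' direction) and to use reflexivity and symmetry of $\sim_q$ so that the argument covers pairs in either order as well as the degenerate pair $\big((x,x^*),(x,x^*)\big)$.
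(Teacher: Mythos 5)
Your proof is correct, and it is essentially the argument the paper has in mind: the corollary is stated without proof as an immediate consequence of the characterization $T$ quasimonotone $\iff T\subset T^{\nu}$, and your direct case check (pairs inside $T$, the degenerate pair, and the mixed pair handled by the definition of $T^{\nu}$ together with symmetry and reflexivity of $\sim_q$) is exactly the one-line verification being left to the reader.
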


Denote $\widecheck T=\cone(T)\cup(X\times\{0\})$. Note that $\edom(T)=\edom(\widecheck T)$.

\begin{corollary} 
An operator $T:X\tos X^*$ is quasimonotone if, and only if, $\widecheck T \subset T^{\nu}$.
\end{corollary}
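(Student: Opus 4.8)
The plan is to derive this corollary directly from the preceding two corollaries together with Proposition~\ref{pro:qpolarpro}, item {\it 3}, since all the technical work has already been done. Recall the notation $\widecheck T=\cone(T)\cup(X\times\{0\})$.

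For the forward implication, suppose $T$ is quasimonotone. By Corollary~\ref{123} (or rather by the chain of equivalences in the Proposition characterizing quasimonotonicity), quasimonotonicity of $T$ gives $T\subset T^{\nu}$, hence $\cone(T)\subset\cone(T^{\nu})$; adding $X\times\{0\}$ and invoking Proposition~\ref{pro:qpolarpro}, items {\it 1} and {\it 3}, we get
\[
\widecheck T=\cone(T)\cup(X\times\{0\})\subset\cone(T^{\nu})\cup(X\times\{0\})=T^{\nu}.
\]
This is exactly the computation displayed in the proof of the Proposition, so the forward direction is immediate.

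For the converse, assume $\widecheck T\subset T^{\nu}$. Since $T\subset\cone(T)\subset\widecheck T$, we obtain $T\subset T^{\nu}$, which by item {\it 2} of the Proposition means precisely that $T$ is quasimonotone. One could alternatively phrase this via Proposition~\ref{pro:qpolarpro}, item {\it 2}: because the part $X\times\{0\}$ of $\widecheck T$ sits inside $T^\nu$ for free, the containment $\widecheck T\subset T^\nu$ reduces to $\cone(T)\subset T^\nu$, i.e. $\cone(T)$ is quasimonotone, which is equivalent to $T$ being quasimonotone by item {\it 5} of the Proposition.

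There is essentially no obstacle here — the statement is a repackaging of the Proposition's equivalence $\text{(}1\text{)}\Leftrightarrow\text{(}6\text{)}$ (note $\widecheck T$ is the same object as $\cone(T)\cup(X\times\{0\})$ appearing there) combined with the fixed-point identity $(\cone(T)\cup(X\times\{0\}))^{\nu}=T^{\nu}$. The only point requiring a moment's care is making sure the "in particular" type inclusions $T\subset\widecheck T$ and $X\times\{0\}\subset T^{\nu}$ are cited correctly, so that the argument does not secretly need quasimonotonicity of $\widecheck T$ as an input in the converse direction.
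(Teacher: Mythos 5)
Your argument is correct and is exactly the route the paper intends: the corollary is an immediate repackaging of the equivalence between quasimonotonicity of $T$ and quasimonotonicity of $\cone(T)\cup(X\times\{0\})$ in the preceding proposition, together with the identity $\widecheck T^{\nu}=T^{\nu}$ and the free inclusions $T\subset\widecheck T$, $X\times\{0\}\subset T^{\nu}$. Both of your phrasings of the converse are sound, so nothing needs to be changed.
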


We say that $T$ is \emph{maximal quasimonotone} if $T$ is quasimonotone and it is maximal in the sense of inclusion, that is, 
if $T\subset S$ and $S$ is quasimonotone, then $T=S$.  A direct consequence of Zorn's Lemma shows that every quasimonotone 
operator possesses a maximal quasimonotone extension, that is, a maximal quasimonotone operator which contains it.

Given $T:X\tos X^*$, let $Q(T)$ be the set of maximal quasimonotone extensions of $T$, that is
\[
Q(T)=\{M: X\tos X^*\::\: M\text{ is maximal quasimonotone and }T\subset M\}.
\]
When $T$ is a quasimonotone operator, we can relate its quasimonotone polar to $Q(T)$. The following proposition is analogous to~\cite[Proposition 22]{BSML}. 
\begin{proposition}
Let $T:X\tos X^*$ be a quasimonotone operator. Then
\begin{enumerate}
 \item $T^{\nu}=\displaystyle\bigcup_{M\in Q(T)}M$;
 \item $T^{\nu\nu}=\displaystyle\bigcap_{M\in Q(T)}M$;
 \item $T$ is maximal quasimonotone if, and only if, $T=T^{\nu}$.
\end{enumerate}
\end{proposition}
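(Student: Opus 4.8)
The plan is to isolate one key fact --- that a maximal quasimonotone operator coincides with its own quasimonotone polar --- and then derive all three items from it together with Corollary~\ref{123} and the formal polarity properties in Proposition~\ref{pro:qpolar}. So I would first prove: \emph{if $M$ is maximal quasimonotone, then $M=M^{\nu}$}. The inclusion $M\subset M^{\nu}$ is just the quasimonotonicity of $M$; for the reverse inclusion, given $(x,x^*)\in M^{\nu}$, Corollary~\ref{123} says that $M\cup\{(x,x^*)\}$ is quasimonotone, and since it contains $M$, maximality forces $(x,x^*)\in M$, so $M^{\nu}\subset M$.

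For item~1, the inclusion $\bigcup_{M\in Q(T)}M\subset T^{\nu}$ follows because each $M\in Q(T)$ contains $T$, hence $M=M^{\nu}\subset T^{\nu}$ by the previous paragraph and the antitonicity in Proposition~\ref{pro:qpolar}. For the reverse inclusion I would take $(x,x^*)\in T^{\nu}$; since $T$ is quasimonotone, Corollary~\ref{123} gives that $T\cup\{(x,x^*)\}$ is quasimonotone, and Zorn's Lemma supplies a maximal quasimonotone extension $M\in Q(T)$ of it, which contains $(x,x^*)$. For item~3, one direction is exactly the key fact applied to $T$ itself; conversely, if $T=T^{\nu}$ and $T\subset S$ with $S$ quasimonotone, then $S\subset S^{\nu}\subset T^{\nu}=T\subset S$, forcing $S=T$, so $T$ is maximal. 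Finally, item~2 is obtained by applying the polarity to item~1: by Proposition~\ref{pro:qpolar}(1),
\[
T^{\nu\nu}=\Big(\bigcup_{M\in Q(T)}M\Big)^{\nu}=\bigcap_{M\in Q(T)}M^{\nu}=\bigcap_{M\in Q(T)}M,
\]
the last equality again using $M=M^{\nu}$.

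I do not expect a serious obstacle: once Corollary~\ref{123} is available the whole argument is essentially bookkeeping. The two points that need a little care are to prove the self-polarity of maximal quasimonotone operators on its own (so that items~1 and~3 are not argued circularly), and to use Zorn's Lemma both to guarantee $Q(T)\neq\emptyset$ and, more importantly, to ensure that every quasimonotone enlargement of $T$ sits inside some member of $Q(T)$ --- this last fact is precisely what makes the ``$\subset$'' direction of item~1 go through.
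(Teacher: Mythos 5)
Your proposal is correct, and it follows exactly the intended route: the paper itself omits the proof (stating only that the result is analogous to Proposition~22 of Mart\'inez-Legaz and Svaiter), and your argument --- first establishing $M=M^{\nu}$ for every maximal quasimonotone $M$ via Corollary~\ref{123}, then using Zorn's Lemma to place any $(x,x^*)\in T^{\nu}$ inside some $M\in Q(T)$, and finally passing item~1 through the polarity identities of Proposition~\ref{pro:qpolar} to get item~2 --- is precisely the standard adaptation of the monotone-polar proof to the quasimonotone setting. No gaps; the two points you flag (proving self-polarity independently and invoking Zorn for extensions of $T\cup\{(x,x^*)\}$) are handled correctly.
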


\begin{corollary}
If $T:X\tos X^*$ is maximal quasimonotone, then $T(x)$ is a weak$^*$-closed, conic and convex set, for all $x\in X$.
\end{corollary}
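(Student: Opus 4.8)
The plan is to obtain this as an immediate consequence of two results already in hand. The first is the characterization, stated in the proposition immediately preceding this corollary, that a quasimonotone operator $T$ is maximal quasimonotone if and only if $T = T^{\nu}$. The second is the earlier Corollary (itself a consequence of Proposition~\ref{prop:Tnuequiv}) asserting that for \emph{every} operator $T:X\tos X^*$ and every $x\in X$, the set $T^{\nu}(x)$ is weak$^*$-closed, conic and convex. Chaining these, if $T$ is maximal quasimonotone then $T(x) = T^{\nu}(x)$ for all $x\in X$, and the right-hand side has exactly the claimed properties.

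If instead one prefers a self-contained argument, I would fix $x\in X$ and split according to Lemma~\ref{lem:VTempty}: either $V_T(x)=\emptyset$, in which case $T(x)=T^{\nu}(x)=X^*$, which trivially is weak$^*$-closed, conic and convex; or $V_T(x)\neq\emptyset$, in which case Proposition~\ref{prop:Tnuequiv} gives $T(x)=T^{\nu}(x)=N_{V_T(x)}(x)$. It then suffices to observe that any normal cone $N_C(x)$ has the three properties, which is routine: writing $N_C(x)=\bigcap_{y\in C}\{x^*\in X^*:\inner{y-x}{x^*}\leq 0\}$ exhibits it as an intersection of weak$^*$-closed convex cones (half-spaces through the origin in the pairing), hence is itself weak$^*$-closed, convex, and a cone.

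I do not expect any genuine obstacle here. The only point requiring attention is that the equality $T=T^{\nu}$ is used in full strength — not merely the inclusion $T\subset T^{\nu}$ coming from quasimonotonicity, but also the reverse inclusion $T^{\nu}\subset T$ coming from maximality — since it is the image sets of $T^{\nu}$, not of $T$ a priori, that are known to be normal cones. Everything else is a direct citation of the preceding corollary and proposition.
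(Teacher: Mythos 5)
Your proposal is correct and matches the paper's intended argument exactly: since a maximal quasimonotone operator satisfies $T=T^{\nu}$ by the preceding proposition, the claim follows from the earlier corollary that $T^{\nu}(x)$ is weak$^*$-closed, conic and convex for every $x$. The alternative self-contained route via Lemma~\ref{lem:VTempty} and Proposition~\ref{prop:Tnuequiv} is also fine, and your remark that the full equality $T=T^{\nu}$ (not just $T\subset T^{\nu}$) is needed is exactly the right point of care.
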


\subsection{Lateral Translations}

Let $T:X\tos X^*$ be a multivalued operator and let $x_0\in X$. We consider the \emph{lateral translation} of $T$ by $x_0$, $\tau_{x_0}T:X\tos X^*$ defined as $\tau_{x_0}T=T+\{(x_0,0)\}$ or, equivalently, $\tau_{x_0}T(x)=T(x-x_0)$. We now show that this kind of translation will preserve quasimonotonicity and maximal quasimonotonicity.
\begin{proposition}\label{pro:lateral}
Let $T:X\tos X^*$ be a multivalued operator and let $x_0\in X$. Then $(\tau_{x_0}T)^{\nu}=\tau_{x_0}(T^{\nu})$.
Moreover, $T$ is quasimonotone (resp. maximal quasimonotone) if, and only if, $\tau_{x_0}T$ is quasimonotone (resp. maximal quasimonotone).
\end{proposition}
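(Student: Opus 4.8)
The plan is to prove the identity $(\tau_{x_0}T)^\nu = \tau_{x_0}(T^\nu)$ first, and then deduce the quasimonotonicity and maximal quasimonotonicity statements from it together with results already established in the excerpt.

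For the polar identity, I would unwind both sides using the definition of the quasimonotone relation. Recall $\tau_{x_0}T = T + \{(x_0,0)\}$, so $(y,y^*)\in\tau_{x_0}T$ if and only if $(y-x_0,y^*)\in T$. Fix $(x,x^*)\in X\times X^*$. Then $(x,x^*)\in(\tau_{x_0}T)^\nu$ means $(x,x^*)\sim_q(y,y^*)$ for all $(y,y^*)\in\tau_{x_0}T$, i.e. for all $(z,y^*)\in T$ (writing $z=y-x_0$) we have $\min\{\inner{(z+x_0)-x}{x^*},\inner{x-(z+x_0)}{y^*}\}\le 0$. Setting $w = x - x_0$, this reads $\min\{\inner{z-w}{x^*},\inner{w-z}{y^*}\}\le 0$ for all $(z,y^*)\in T$, which is exactly $(w,x^*)=(x-x_0,x^*)\in T^\nu$. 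Hence $(x,x^*)\in(\tau_{x_0}T)^\nu$ iff $(x-x_0,x^*)\in T^\nu$ iff $(x,x^*)\in\tau_{x_0}(T^\nu)$. The only subtlety worth spelling out is that the translation acts only on the first coordinate, so the duality products involving $y^*$ are unaffected — there is essentially no obstacle here, it is a routine substitution.

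For the equivalence concerning quasimonotonicity: by the characterization proved earlier, $T$ is quasimonotone iff $T\subset T^\nu$. Applying $\tau_{x_0}$, which is a bijection on $X\times X^*$ preserving inclusions, $T\subset T^\nu$ holds iff $\tau_{x_0}T\subset\tau_{x_0}(T^\nu)=(\tau_{x_0}T)^\nu$, i.e. iff $\tau_{x_0}T$ is quasimonotone. Similarly, $T$ is maximal quasimonotone iff $T = T^\nu$, and this holds iff $\tau_{x_0}T = \tau_{x_0}(T^\nu) = (\tau_{x_0}T)^\nu$, i.e. iff $\tau_{x_0}T$ is maximal quasimonotone. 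One could alternatively argue the maximality part directly: if $\tau_{x_0}T\subset S$ with $S$ quasimonotone, then $T\subset\tau_{-x_0}S$ which is quasimonotone by the first equivalence, so $T = \tau_{-x_0}S$ and $\tau_{x_0}T = S$; but routing through $T = T^\nu$ is cleaner.

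I do not anticipate a real obstacle in this proof; the content is entirely that lateral translation commutes with the polar operation, and once that is established the rest is formal manipulation using the already-proven characterizations $T$ quasimonotone $\iff T\subset T^\nu$ and $T$ maximal quasimonotone $\iff T = T^\nu$. The one point requiring a little care is making sure the symmetry of $\sim_q$ and the fact that only the primal variable is shifted are used correctly when rewriting the minimum; I would present that substitution explicitly rather than leaving it to the reader.
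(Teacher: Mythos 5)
Your proposal is correct and follows essentially the same route as the paper: establish $(\tau_{x_0}T)^{\nu}=\tau_{x_0}(T^{\nu})$ by a direct substitution in the quasimonotone relation, then deduce invariance of quasimonotonicity and maximal quasimonotonicity from the characterizations $T\subset T^{\nu}$ and $T=T^{\nu}$. The paper merely leaves this last deduction implicit, so no further comment is needed.
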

\begin{proof}
Let $(x,x^*)\in(\tau_{x_0}T)^{\nu}$ and pick any $(y,y^*)\in T$. Then $(y+x_0,y^*)\in \tau_{x_0}T$ and
\[
\min\{\inner{x-x_0-y}{y^*},\inner{y-(x-x_0)}{x^*}\}\leq 0.
\]
This implies $(x-x_0,x^*)\sim_q(y,y^*)$, for arbitrary $(y,y^*)\in T$, that is $(x,x^*)-(x_0,0)\in T^{\nu}$. Therefore $(x,x^*)\in\tau_{x_0}(T^{\nu})$. The reciprocal assertion is analogous. Invariance of both quasi and maximal quasimonotonicity is a direct consequence of the invariance of the quasimonotone polar by lateral translations.
\end{proof}

\subsection{On maximal quasimonotonicity}

In~\cite{Aussel2013-4}, Aussel and Eberhard defined another notion of maximality for quasimonotone operators, 
which from now on will be called \emph{AE-maximality}. Explicitly, $T: X\tos X^*$ is \emph{AE-maximal quasimonotone}, 
if $T$ is quasimonotone and, for every quasimonotone operator $S$ such that $T(x)\subset \cone(S(x))$, for all $x\in \edom(T)$, 
then $\cone(T(x))=\cone(S(x))$, for all $x\in\edom(T)$ and $\edom(T)=\edom(S)$. 

We now study the connection between these two notions of maximality. The following theorem recovers and extends Proposition~3 and~5 in \cite{Aussel2013-4}.

\begin{theorem}\label{teo:AEmaximal}
Let $T: X\tos X^*$ be a quasimonotone operator. The following conditions are equivalent:
\begin{enumerate}
 \item $T$ is AE-maximal quasimonotone;
 \item for every quasimonotone operator $S$ such that $\widecheck
 T\subset \widecheck S$, $\widecheck
 T=\widecheck S$;
 \item $\widecheck T$ is maximal quasimonotone;
 \item $\widecheck T=T^{\nu}$.
\end{enumerate}

\end{theorem}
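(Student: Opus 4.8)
The plan is to prove the cycle of implications $(4)\Rightarrow(3)\Rightarrow(2)\Rightarrow(1)\Rightarrow(4)$, exploiting the already-established characterization that an operator $M$ is maximal quasimonotone iff $M=M^\nu$, together with the identity $\widecheck T\subset T^\nu$ (valid for quasimonotone $T$) and the polarity facts $\cone(T)^\nu=T^\nu=\cone(T^\nu)$ and $(T\cup S)^\nu=T^\nu$ whenever $S\subset X\times\{0\}$, which jointly give $(\widecheck T)^\nu=T^\nu$.

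\medskip

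\noindent\emph{$(4)\Rightarrow(3)$.} If $\widecheck T=T^\nu$, then $(\widecheck T)^\nu=T^{\nu\nu}\subset T^\nu=\widecheck T$; but also $\widecheck T\subset(\widecheck T)^{\nu\nu}$ by Proposition~\ref{pro:qpolar}, and combining these with $(\widecheck T)^\nu=T^\nu=\widecheck T$ yields $\widecheck T=(\widecheck T)^\nu$, i.e.\ $\widecheck T$ is maximal quasimonotone.

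\medskip

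\noindent\emph{$(3)\Rightarrow(2)$.} This is immediate from the definition of maximal quasimonotonicity: if $\widecheck T$ is maximal quasimonotone and $S$ is quasimonotone with $\widecheck T\subset\widecheck S$, then since $\widecheck S$ is quasimonotone (being $\cone(S)\cup(X\times\{0\})$ for quasimonotone $S$) we get $\widecheck T=\widecheck S$.

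\medskip

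\noindent\emph{$(2)\Rightarrow(1)$.} Suppose $S$ is quasimonotone with $T(x)\subset\cone(S(x))$ for all $x\in\edom(T)$. Then $\widecheck T\subset\widecheck S$: indeed $\cone(T(x))\subset\cone(\cone(S(x)))=\cone(S(x))$ on $\edom(T)$, and on $X\times\{0\}$ the inclusion is trivial; one must also check the points where $T(x)=\{0\}$ cause no trouble, which they do not since $\widecheck T$ agrees with $X\times\{0\}$ there. By hypothesis $\widecheck T=\widecheck S$, and comparing effective domains and cones of values (using $\edom(\widecheck T)=\edom(T)$, $\edom(\widecheck S)=\edom(S)$) gives $\cone(T(x))=\cone(S(x))$ for all $x\in\edom(T)$ and $\edom(T)=\edom(S)$, which is exactly AE-maximality.

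\medskip

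\noindent\emph{$(1)\Rightarrow(4)$.} Since $T$ is quasimonotone, $\widecheck T\subset T^\nu$, and it remains to prove the reverse inclusion. Fix $(x,x^*)\in T^\nu$. By Corollary~\ref{123}, $T\cup\{(x,x^*)\}$ is quasimonotone, hence so is $S:=\widecheck T\cup\{(x,x^*)\}=\cone(T\cup\{(x,x^*)\})\cup(X\times\{0\})=\widecheck{\,T\cup\{(x,x^*)\}}$. Now $T(y)\subset\cone(S(y))$ for all $y\in\edom(T)$, so AE-maximality forces $\edom(S)=\edom(T)$ and $\cone(S(y))=\cone(T(y))$ for all $y\in\edom(T)$; in particular $x^*\in\cone(S(x))=\cone(\widecheck T(x))=\widecheck T(x)$. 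Thus $T^\nu\subset\widecheck T$, completing the proof.

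\medskip

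\noindent The step I expect to require the most care is $(1)\Rightarrow(4)$: one must handle the case $x^*=0$ (where $(x,0)\in\widecheck T$ automatically, so nothing to prove) separately from $x^*\neq 0$, and in the latter case verify that adjoining $(x,x^*)$ genuinely keeps $x$ inside $\edom$ of the enlarged operator so that the AE-maximality conclusion applies at the point $x$ itself; the bookkeeping around $\edom$ versus the cone-valued structure is where a naive argument could slip.
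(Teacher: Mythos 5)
Your cycle $(4)\Rightarrow(3)\Rightarrow(2)\Rightarrow(1)\Rightarrow(4)$ is sound, and three of its legs use exactly the paper's ingredients: $(4)\Rightarrow(3)$ rests on the polarity identity $(\widecheck T)^{\nu}=T^{\nu}$ (Propositions~\ref{pro:qpolar} and~\ref{pro:qpolarpro}) plus the characterization of maximality by $M=M^{\nu}$ (your detour through $T^{\nu\nu}$ and $(\widecheck T)^{\nu\nu}$ is superfluous, since $(\widecheck T)^{\nu}=T^{\nu}=\widecheck T$ already concludes); $(3)\Rightarrow(2)$ is the paper's argument verbatim; and $(2)\Rightarrow(1)$ is the paper's translation of ``$T(x)\subset\cone(S(x))$ on $\edom(T)$'' into $\widecheck T\subset\widecheck S$. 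Where you genuinely diverge is in closing the loop: the paper proves $1\Rightarrow 2$ by applying AE-maximality to an arbitrary quasimonotone $S$ with $\widecheck T\subset\widecheck S$, so it never needs a single-point extension, whereas you prove $1\Rightarrow 4$ directly by adjoining a point $(x,x^*)\in T^{\nu}$ and invoking Corollary~\ref{123}. Both routes work; the paper's is slightly leaner, yours makes the link between $T^{\nu}$ and AE-maximality more concrete at the price of extra bookkeeping.

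That bookkeeping is where your write-up needs two patches. First, the asserted identity $\widecheck T\cup\{(x,x^*)\}=\cone(T\cup\{(x,x^*)\})\cup(X\times\{0\})$ is false in general: the right-hand side contains the whole ray $\{x\}\times\cone(\{x^*\})$, not just the single pair. This is harmless — $\widecheck T\cup\{(x,x^*)\}$ is a subset of the quasimonotone operator $\cone(T\cup\{(x,x^*)\})\cup(X\times\{0\})$ and hence quasimonotone, or you may simply take $S=\cone(T\cup\{(x,x^*)\})\cup(X\times\{0\})$ itself — but as stated the justification is wrong. Second, the $\edom$ issue you flag must actually be resolved, since the AE conclusion $\cone(T(y))=\cone(S(y))$ is only available at $y\in\edom(T)$: if $x^*=0$ then $(x,x^*)\in X\times\{0\}\subset\widecheck T$ and there is nothing to prove; if $x^*\neq0$ then $S(x)\neq\{0\}$, so $x\in\edom(S)=\edom(T)$, and only then does AE-maximality give $x^*\in\cone(S(x))=\cone(T(x))\subset\widecheck T(x)$. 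With these one-line repairs your proof is complete.
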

\begin{proof}
First assume {\it 2}, and let $M$ be a quasimonotone operator such that $\widecheck T\subset M$. Then $\widecheck T\subset M\subset \widecheck M$. Using {\it 2}, we obtain that $\widecheck M = \widecheck T$, thus implying $\widecheck T= M$ and therefore, condition {\it 3}. Reciprocally, if $S$ is quasimonotone and $\widecheck T\subset \widecheck S$, maximal quasimonotonicity of $\widecheck T$ implies $\widecheck T=\widecheck S$. Thus, {\it 2} and {it 3} are equivalent. Moreover, note that Proposition~\ref{pro:qpolarpro}, item {\it 3.}, implies the equivalence between {\it 3} and {\it 4}.

Assume {\it 1.}, that is, $T$ is AE-maximal quasimonotone, and let $S$ be a quasimonotone operator such that $\widecheck T\subset \widecheck S$. Then, for every $x\in\edom(T)$, 
\[
\{0\}\neq T(x)\subset\cone(T)(x)=\widecheck T(x)\subset \widecheck S(x)=\cone(S)(x).
\]
By AE-maximality of $T$, $\cone(T)(x)=\cone(S)(x)$, for every $x\in\edom(T)$, and $\edom(T)=\edom(S)$. This implies that $\widecheck T=\widecheck S$.  On the other hand, assume that $T(x)\subset \cone(S)(x)$, for all $x\in\edom(T)$, for a certain quasimonotone operator $S$. Hence $\widecheck T\subset \widecheck S$, which, by {\it 2}, implies $\widecheck T=\widecheck S$. Therefore $\edom(T)=\edom(S)$ and $\cone(T)(x)=\cone(S)(x)$, for all $x\in\edom(T)$. This means that $T$ is AE-maximal quasimonotone.
\end{proof}

A direct corollary of Proposition~\ref{pro:lateral} and Theorem~\ref{teo:AEmaximal} is the following.
\begin{corollary}
Let $T:X\tos X^*$ be a AE-maximal quasimonotone operator and $x_0\in X$. Then $\tau_{x_0}T$ is also AE-maximal quasimonotone.
\end{corollary}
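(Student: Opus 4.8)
The plan is to deduce this corollary directly from the two results it cites, without re-entering the combinatorial definitions. By Theorem~\ref{teo:AEmaximal}, the hypothesis that $T$ is AE-maximal quasimonotone is equivalent to the identity $\widecheck T = T^{\nu}$. Likewise, to prove that $\tau_{x_0}T$ is AE-maximal quasimonotone, it suffices by the same theorem to establish $\widecheck{\tau_{x_0}T} = (\tau_{x_0}T)^{\nu}$. So the corollary reduces to checking that the decoration $\widecheck{(\,\cdot\,)}$ and the polar $(\,\cdot\,)^{\nu}$ both commute with the lateral translation $\tau_{x_0}$, and then transporting the equation.

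First I would record that $\tau_{x_0}$ commutes with $\cone$ in the first variable and fixes $X\times\{0\}$: since $\tau_{x_0}T = T + \{(x_0,0)\}$ acts only on the domain coordinate, we have $\cone(\tau_{x_0}T) = \tau_{x_0}(\cone T)$, and $\tau_{x_0}(X\times\{0\}) = X\times\{0\}$ because $\tau_{x_0}$ is a bijection of $X$ translating the domain. Combining these with the definition $\widecheck S = \cone(S)\cup(X\times\{0\})$ and the fact that $\tau_{x_0}$, being a translation on the $X$-factor, distributes over unions, gives $\widecheck{\tau_{x_0}T} = \tau_{x_0}(\widecheck T)$. Second, Proposition~\ref{pro:lateral} already supplies $(\tau_{x_0}T)^{\nu} = \tau_{x_0}(T^{\nu})$.

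Now I would assemble the chain. Starting from $\widecheck T = T^{\nu}$ (the AE-maximality of $T$ via Theorem~\ref{teo:AEmaximal}), apply $\tau_{x_0}$ to both sides to get $\tau_{x_0}(\widecheck T) = \tau_{x_0}(T^{\nu})$; then rewrite the left side as $\widecheck{\tau_{x_0}T}$ using the commutation just established, and the right side as $(\tau_{x_0}T)^{\nu}$ using Proposition~\ref{pro:lateral}. This yields $\widecheck{\tau_{x_0}T} = (\tau_{x_0}T)^{\nu}$, which by condition {\it 4} $\Rightarrow$ {\it 1} of Theorem~\ref{teo:AEmaximal} — noting that $\tau_{x_0}T$ is quasimonotone by Proposition~\ref{pro:lateral} — says exactly that $\tau_{x_0}T$ is AE-maximal quasimonotone.

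The argument is essentially bookkeeping, so there is no serious obstacle; the one point deserving a line of care is the commutation $\widecheck{\tau_{x_0}T} = \tau_{x_0}(\widecheck T)$, specifically that $\tau_{x_0}$ leaves $X\times\{0\}$ invariant and passes through the union — both immediate once one observes $\tau_{x_0}$ is simply the shift $(x,x^*)\mapsto(x+x_0,x^*)$ on the product, which is a bijection. Everything else is an application of already-proved equivalences.
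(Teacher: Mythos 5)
Your proof is correct and follows exactly the route the paper intends: the paper states this as a direct corollary of Proposition~\ref{pro:lateral} and Theorem~\ref{teo:AEmaximal}, and your argument simply fills in that path, with the only detail worth writing out being the commutation $\widecheck{\tau_{x_0}T}=\tau_{x_0}(\widecheck T)$, which you verify correctly since $\cone$ acts on the dual variable while $\tau_{x_0}$ shifts the domain variable and fixes $X\times\{0\}$.
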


If $T$ is a quasimonotone operator such that $T^{\nu}$ is quasimonotone (therefore, maximal quasimonotone), 
then $T$ will be called \emph{pre-maximal quasimonotone}.  A pre-maximal quasimonotone operator is not 
necessarily maximal, but has a unique maximal quasimonotone extension.

\begin{corollary}
Let $T: X\tos X^*$ be a quasimonotone operator. If $T$ is AE-maximal quasimonotone then $T$ is pre-maximal quasimonotone. 
\end{corollary}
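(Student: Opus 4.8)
The plan is to read this off directly from Theorem~\ref{teo:AEmaximal} together with the six‑fold equivalence characterizing quasimonotonicity. First recall that, by definition, $T$ is pre-maximal quasimonotone precisely when $T$ is quasimonotone and $T^{\nu}$ is quasimonotone; the first half is part of our hypothesis, so the only thing to establish is that $T^{\nu}$ is quasimonotone.

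Next I would invoke the equivalence \emph{1.\ $\Leftrightarrow$ 4.} in Theorem~\ref{teo:AEmaximal}: since $T$ is AE-maximal quasimonotone, we have $\widecheck T = T^{\nu}$, where $\widecheck T = \cone(T)\cup(X\times\{0\})$. Thus it suffices to show that $\widecheck T$ is quasimonotone.

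Finally, because $T$ is quasimonotone, the Proposition listing the six equivalent conditions for quasimonotonicity gives (via \emph{1.\ $\Rightarrow$ 6.}) that $\cone(T)\cup(X\times\{0\})=\widecheck T$ is quasimonotone. Combining this with $\widecheck T = T^{\nu}$ yields that $T^{\nu}$ is quasimonotone, hence $T$ is pre-maximal quasimonotone. (As a side remark, one then automatically knows that $T^{\nu}=\widecheck T$ is the unique maximal quasimonotone extension of $T$, consistent with the comment preceding the corollary.)

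I do not anticipate a genuine obstacle here: the statement is a formal consequence of chaining the already-proved equivalences, and the only point requiring a moment's care is to verify that the definition of pre-maximality asks exactly for quasimonotonicity of $T$ and of $T^{\nu}$, both of which have now been secured.
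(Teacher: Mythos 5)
Your proof is correct and follows essentially the same route as the paper: the authors also deduce from Theorem~\ref{teo:AEmaximal} that $T^{\nu}=\widecheck T$ and then note that $\widecheck T$ is (maximal) quasimonotone, hence $T$ is pre-maximal quasimonotone. Your extra step of getting quasimonotonicity of $\widecheck T$ from the six-fold equivalence (item \emph{1} $\Rightarrow$ \emph{6}) is just a slightly more explicit version of the same observation.
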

\begin{proof} 
Is enough to observe that $T$ AE-maximal implies that $T^{\nu}=\widecheck T$ is (maximal) quasimonotone.
\end{proof}

However, the converse assertion does not hold.

\begin{example}[Pre-maximality doesn't imply AE-maximality]\label{ex:prenotae}
	Let $T=\mathbb{Z}\times\{1\}$, which is monotone, therefore, quasimonotone. Now, given $(a,b)\in\R^2$ with $b>0$, note that $\{(a,b)\}^{\nu}=\R^2\setminus\{(x,y)\in\R^2\::\:x>a,\,y<0\}$. Therefore,
	\[
	T^{\nu}=\left(\bigcup_{a\in\mathbb{Z}}\{(a,1)\}\right)^{\nu}=\bigcap_{a\in\mathbb{Z}}\{(a,1)\}^{\nu}=\{(a,b)\in\R^2\::\:b\geq 0\},
	\]
	so $T^{\nu}$ is quasimonotone and $T$ is pre-maximal quasimonotone. However $T$ isn't AE-maximal quasimonotone, 
	since $(1/2,1)\in T^{\nu}\setminus \widecheck T$.
\end{example}

Pre-maximal quasimonotone operators share the same properties as their monotone counterparts. The following proposition is analogous to~\cite[Proposition 36]{BSML}.
\begin{proposition}
Let $T:X\tos X^*$ be a quasimonotone operator. Then the following conditions are equivalent:
\begin{enumerate}
 \item $T$ is pre-maximal quasimonotone;
 \item $T^\nu=T^{\nu\nu}$;
 \item $T^{\nu\nu}$ is maximal quasimonotone.
\end{enumerate}

\end{proposition}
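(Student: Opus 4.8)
The plan is to prove the three-way equivalence by exploiting the polarity structure of $T\mapsto T^\nu$ established in Proposition~\ref{pro:qpolar}, together with the characterization ``$S$ maximal quasimonotone $\iff$ $S=S^\nu$'' from the proposition preceding the Lateral Translations subsection. The key algebraic fact I will repeatedly use is $T^{\nu\nu\nu}=T^\nu$; this single identity is what makes all three conditions collapse onto one another.

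First I would show \textit{1} $\Rightarrow$ \textit{2}. Assume $T$ is pre-maximal quasimonotone, i.e.\ $T^\nu$ is quasimonotone, hence (being quasimonotone) $T^\nu\subset T^{\nu\nu}$. For the reverse inclusion, since $T$ is quasimonotone we have $T\subset T^\nu$, and applying item~\textit{4} of Proposition~\ref{pro:qpolar} (inclusion reversal) gives $T^{\nu\nu}\subset T^\nu$. Combining the two inclusions yields $T^\nu=T^{\nu\nu}$. Next, \textit{2} $\Rightarrow$ \textit{3}: if $T^\nu=T^{\nu\nu}$ then applying $(\cdot)^\nu$ and using $T^{\nu\nu\nu}=T^\nu$ we get $(T^{\nu\nu})^\nu = T^{\nu\nu\nu}=T^\nu=T^{\nu\nu}$, so $T^{\nu\nu}$ equals its own quasimonotone polar, which by the earlier characterization means exactly that $T^{\nu\nu}$ is maximal quasimonotone. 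Finally, \textit{3} $\Rightarrow$ \textit{1}: if $T^{\nu\nu}$ is maximal quasimonotone then in particular it is quasimonotone; but $T^\nu = T^{\nu\nu\nu}=(T^{\nu\nu})^\nu$, and the $\nu$-polar of a maximal quasimonotone operator $M=T^{\nu\nu}$ satisfies $M^\nu = M$ (again by the characterization of maximality), so $T^\nu = T^{\nu\nu}$ is quasimonotone, i.e.\ $T$ is pre-maximal quasimonotone.

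An alternative, slightly slicker route for \textit{3} $\Rightarrow$ \textit{1} is simply to note that $T^{\nu\nu}$ maximal quasimonotone forces $T^\nu$ to be quasimonotone because $T^\nu\subset T^{\nu\nu}$ always holds when $T$ is quasimonotone (by the $T\subset T^\nu \Rightarrow T^{\nu\nu}\subset T^\nu$ argument used above, combined with $T^\nu\subset T^{\nu\nu}$ from item~\textit{2} of Proposition~\ref{pro:qpolar} applied to $T^\nu$), hence $T^\nu=T^{\nu\nu}$, a sub-operator of a quasimonotone operator and thus quasimonotone. I would probably present whichever version keeps the cross-references cleanest.

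I do not anticipate a genuine obstacle here: every implication is a short manipulation with $(\cdot)^\nu$, and the only facts needed are Proposition~\ref{pro:qpolar} (items~\textit{2}, \textit{3}, \textit{4}), the standing hypothesis that $T$ is quasimonotone (so $T\subset T^\nu$), and the theorem characterizing maximal quasimonotonicity as $S=S^\nu$. The one point requiring a moment's care is making sure, in \textit{2} $\Rightarrow$ \textit{3} and \textit{3} $\Rightarrow$ \textit{1}, that the operator $T^{\nu\nu}$ is itself quasimonotone before invoking the $S=S^\nu$ criterion — but this is immediate since $T$ quasimonotone implies $T^{\nu\nu}$ quasimonotone (an equivalence already recorded in the proposition listing the six equivalent forms of quasimonotonicity). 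So the proof is essentially a diagram-chase on the lattice of $\nu$-polars, entirely parallel to the monotone case in~\cite[Proposition 36]{BSML}.
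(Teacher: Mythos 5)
The paper gives no proof of this proposition (it only points to the analogy with \cite[Proposition 36]{BSML}), and your main cycle \textit{1}$\Rightarrow$\textit{2}$\Rightarrow$\textit{3}$\Rightarrow$\textit{1} is correct and is exactly the intended argument: quasimonotonicity of $T^\nu$ gives $T^\nu\subset T^{\nu\nu}$, quasimonotonicity of $T$ gives $T^{\nu\nu}\subset T^\nu$, and the identities $T^{\nu\nu\nu}=T^\nu$ together with the characterization ``$S$ quasimonotone is maximal if and only if $S=S^\nu$'' close the loop (your care in checking that $T^{\nu\nu}$ is quasimonotone before invoking that characterization is exactly right). However, drop the ``slicker alternative'' for \textit{3}$\Rightarrow$\textit{1}: it is not sound as justified. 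Item \textit{2} of Proposition~\ref{pro:qpolar} applied to $T^\nu$ yields only the trivial inclusion $T^\nu\subset (T^\nu)^{\nu\nu}=T^{\nu\nu\nu}=T^\nu$, not $T^\nu\subset T^{\nu\nu}$; indeed $T^\nu\subset T^{\nu\nu}=(T^\nu)^\nu$ is precisely the statement that $T^\nu$ is quasimonotone, i.e.\ that $T$ is pre-maximal, which is the conclusion you are trying to reach, and it genuinely fails for any quasimonotone $T$ with more than one maximal quasimonotone extension (then $T^\nu=\bigcup_{M\in Q(T)}M$ is not quasimonotone). So if you present only one version of \textit{3}$\Rightarrow$\textit{1}, it must be the first one, via $(T^{\nu\nu})^\nu=T^{\nu\nu}$ and $T^{\nu\nu\nu}=T^\nu$.
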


For $T:X\tos X^*$ and $\alpha^*\in X^*$, we denote $T+\alpha^*:X\tos X^*$ to the operator defined as
\[
(T+\alpha^*)(x)=T(x)+\alpha^*,\qquad \forall x\in X.
\] 
The relation between the quasimonotonicity of the \emph{linear perturbations} $T+\alpha^*$ and the monotonicity of $T$ was established by Aussel, Corvellec and Lassonde~\cite{AusCorLass94}. Later on, Aussel and Eberhard extended this result to include maximality~\cite[Proposition~6]{Aussel2013-4}. We now present an analogous version of this result.
\begin{proposition}
Let $T:X\tos X^*$ such that $T+\alpha^*:X\tos X^*$ is pre-maximal quasimonotone, for all $\alpha^*\in X^*$. Then $T$ is pre-maximal monotone.
\end{proposition}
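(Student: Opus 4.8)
The plan is to reduce the statement to the classical characterization of Aussel, Corvellec and Lassonde~\cite{AusCorLass94}: an operator $S:X\tos X^*$ is monotone if and only if $S+\beta^*$ is quasimonotone for every $\beta^*\in X^*$. I will use this criterion twice, together with the elementary inclusion $S^\mu\subset S^\nu$ and the way the monotone polar interacts with a linear perturbation.

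First I would note that the hypothesis applied with $\alpha^*=0$ says that $T$ itself is pre-maximal quasimonotone, hence quasimonotone; more generally $T+\alpha^*$ is quasimonotone for every $\alpha^*\in X^*$, so by the Aussel--Corvellec--Lassonde criterion $T$ is monotone. Since $T$ is pre-maximal monotone precisely when both $T$ and $T^\mu$ are monotone, it only remains to prove that $T^\mu$ is monotone.

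The key observation is the identity $(T+\alpha^*)^\mu=T^\mu+\alpha^*$, valid for every $\alpha^*\in X^*$; it follows at once from the definition of the monotone polar, since for $(y,y^*)\in T$ one has $\inner{x-y}{(x^*+\alpha^*)-(y^*+\alpha^*)}=\inner{x-y}{x^*-y^*}$. Now fix $\alpha^*\in X^*$. Since $T+\alpha^*$ is pre-maximal quasimonotone, its quasimonotone polar $(T+\alpha^*)^\nu$ is quasimonotone; as $(T+\alpha^*)^\mu\subset(T+\alpha^*)^\nu$ and every sub-operator of a quasimonotone operator is quasimonotone, the operator $T^\mu+\alpha^*=(T+\alpha^*)^\mu$ is quasimonotone. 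Because $\alpha^*\in X^*$ was arbitrary, the Aussel--Corvellec--Lassonde criterion applied to $S=T^\mu$ yields that $T^\mu$ is monotone, and therefore $T$ is pre-maximal monotone.

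I do not foresee a genuine obstacle; the points needing care are essentially bookkeeping: distinguishing the lateral translation $\tau_{x_0}$ (acting on the domain variable) from the perturbation $T+\alpha^*$ (acting on the range variable), verifying the identity $(T+\alpha^*)^\mu=T^\mu+\alpha^*$, and invoking the Aussel--Corvellec--Lassonde equivalence in its non-trivial direction, which is exactly where the ``polar'' content of the hypothesis enters, via the inclusion $(T+\alpha^*)^\mu\subset(T+\alpha^*)^\nu$. If one wishes, the inclusion used above can be upgraded to the exact identity $T^\mu=\bigcap_{\alpha^*\in X^*}\big((T+\alpha^*)^\nu-\alpha^*\big)$ by a short one-dimensional case analysis, but this refinement is not needed here.
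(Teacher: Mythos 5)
Your proof is correct, and it shares the paper's two main ingredients: the Aussel--Corvellec--Lassonde (ACL) criterion to get monotonicity of $T$, and the chain $(T+\alpha^*)^{\mu}\subset(T+\alpha^*)^{\nu}$ together with the quasimonotonicity of $(T+\alpha^*)^{\nu}$ coming from pre-maximality. Where you diverge is in how you finish: you isolate the identity $(T+\alpha^*)^{\mu}=T^{\mu}+\alpha^*$ (which is indeed immediate from the definition of the monotone polar), deduce that every linear perturbation $T^{\mu}+\alpha^*$ is quasimonotone, and then invoke the ACL equivalence a second time, now applied to $T^{\mu}$, to conclude that $T^{\mu}$ is monotone. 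The paper instead works pointwise: given $(x,x^*),(y,y^*)\in T^{\mu}$ it picks the specific midpoint perturbation $\alpha_0^*=-\tfrac{1}{2}(x^*+y^*)$, notes that the shifted pairs $\bigl(x,\tfrac{1}{2}(x^*-y^*)\bigr)$ and $\bigl(y,\tfrac{1}{2}(y^*-x^*)\bigr)$ lie in $(T+\alpha_0^*)^{\mu}\subset(T+\alpha_0^*)^{\nu}$, and reads off $\inner{x-y}{x^*-y^*}\geq 0$ from the quasimonotone relation. Your route is more modular and conceptually cleaner (the polar commutes with linear perturbations, so the hypothesis transfers verbatim to $T^{\mu}$), at the price of using the nontrivial direction of ACL twice as a black box; the paper's route inlines, for the two given points, exactly the midpoint-perturbation trick that underlies that nontrivial direction, keeping the second half of the argument self-contained. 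Your closing remark about the identity $T^{\mu}=\bigcap_{\alpha^*}\bigl((T+\alpha^*)^{\nu}-\alpha^*\bigr)$ is also correct (its proof is again the midpoint choice of $\alpha^*$), but, as you say, it is not needed.
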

\begin{proof}
Monotonicity of $T$ is obtained from~\cite[Proposition 2.1]{AusCorLass94}, so remains to prove that $T^{\mu}$ is monotone. 
Let $(x,x^*), (y,y^*)\in T^\mu$. Then, for all  $\alpha^*\in X^*$,
\[
(x,x^*+\alpha^*), (y,y^*+\alpha^*)\in (T+\alpha^*)^\mu.
\]
In particular, for $\alpha_0^*=-\displaystyle\frac{x^*+y^*}{2}\in X^*$, 
\[
\left(x,\dfrac{1}{2}(x^*-y^*)\right), \left(y,\dfrac{1}{2}(y^*-x^*)\right)\in (T+\alpha_0^*)^\mu.
\]
Since $(T+\alpha^*)^\mu\subset (T+\alpha^*)^\nu$, and using the fact that $(T+\alpha^*)^\nu$ is quasimonotone, we obtain
\[
\frac{1}{2}\min\{\inner{y-x}{x^*-y^*},\inner{x-y}{y^*-x^*}\} \leq 0
\]
that is, $\langle x^*-y^*,x-y\rangle\geq0$. Therefore $T^\mu$ is monotone.
\end{proof}

\begin{remark}\label{rem:Talpha}
Proposition~6 in \cite{Aussel2013-4} states that if $T+\alpha^*$ is AE-maximal quasimonotone, 
for every $\alpha^*\in X^*$, then $T$ is maximal monotone.  
It is not difficult to present an example of a maximal monotone operator which is not AE-maximal quasimonotone. 
For instance, consider $T=I:\R\to\R$, $T(x)=x$, the identity operator. 
Moreover, no linear perturbation of the identity is AE-maximal quasimonotone.   
Even if an operator is AE-maximal quasimonotone, it could have a non AE-maximal quasimonotone linear perturbation, 
for example, $S:\R\tos\R$ defined as $S(x)=\{x/|x|\}$, if $x\neq 0$, and $S(0)=[-1,1]$, is maximal monotone and also 
AE-maximal quasimonotone, but $T+1$ is not. 
\end{remark}

\subsection{Further properties of quasimonotone operators}
Given $T:X\tos X^*$¨, denote by 
\[
E_T=\{x\in X\::\: V_T(x)=\emptyset\}.  
\]
Let $T,S:X\tos X^*$ be multivalued operators such that $T\subset S$. Since $V_T(x)\subset V_S(x)$, follows immediately that $E_S\subset E_T$. 

We now study some further properties of the set $E_T$. 
\begin{proposition}
Let $T:X\tos X^*$. Then $E_T$ is convex and weak$^*$-closed.
\end{proposition}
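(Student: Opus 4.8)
**Proof plan for: $E_T$ is convex and weak$^*$-closed.**

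The plan is to translate the definition of $E_T$ through the two lemmas already established. By Lemma~\ref{lem:VTempty}, $x \in E_T$ if and only if $T^\nu(x) = X^*$. So I would first rewrite
\[
E_T = \{x \in X \::\: T^\nu(x) = X^*\}.
\]
Next, I would observe that, unwinding the definition of the quasimonotone polar, $T^\nu(x) = X^*$ means precisely that \emph{every} $x^* \in X^*$ is quasimonotonically related to every $(y,y^*) \in T$; equivalently, for each fixed $(y,y^*) \in T$, the set of $x$ for which this holds is
\[
S_{(y,y^*)} = \{x \in X \::\: (x,x^*) \sim_q (y,y^*)\ \text{for all}\ x^* \in X^*\},
\]
and $E_T = \bigcap_{(y,y^*)\in T} S_{(y,y^*)}$. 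Since an arbitrary intersection of convex, weak$^*$-closed sets is convex and weak$^*$-closed, it suffices to show each $S_{(y,y^*)}$ has these properties.

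Now I would analyze a single $S_{(y,y^*)}$. If $y^* = 0$, then by Example~\ref{ejm:X0} the relation $\sim_q$ holds automatically, so $S_{(y,0)} = X$, which is certainly convex and weak$^*$-closed. If $y^* \neq 0$, I claim $S_{(y,y^*)} = \{x \in X \::\: \inner{x-y}{y^*} \leq 0\}$, a closed half-space. Indeed, if $\inner{x-y}{y^*} \leq 0$ then $\min\{\inner{y-x}{x^*}, \inner{x-y}{y^*}\} \leq 0$ for every $x^*$, so $x \in S_{(y,y^*)}$; conversely, if $\inner{x-y}{y^*} > 0$, then (since $y^* \neq 0$ means $x \ne y$, or directly) pick $x^* \in X^*$ with $\inner{y-x}{x^*} > 0$ via Hahn--Banach as in the proof of Lemma~\ref{lem:VTempty}, so $(x,x^*) \not\sim_q (y,y^*)$ and $x \notin S_{(y,y^*)}$. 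A half-space of the form $\{x : \inner{x}{y^*} \leq c\}$ with $y^* \in X^*$ is convex and weak$^*$-closed (it is the preimage of $(-\infty,c]$ under the weak$^*$-continuous functional $\inner{\cdot}{y^*}$). Hence each $S_{(y,y^*)}$ is convex and weak$^*$-closed, and so is the intersection $E_T$.

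I do not anticipate a serious obstacle; the only point requiring a little care is the case distinction on whether $y^* = 0$, and making sure the half-space is weak$^*$-closed rather than merely norm-closed — this is fine because the defining functional $\inner{\cdot}{y^*}$ is an element of $X^*$ and hence weak$^*$-continuous on $X$. (Alternatively, one could derive the result even more quickly from Proposition~\ref{prop:Tnuequiv} together with Proposition~\ref{pro:qpolarpro}, item~\emph{4}, but the direct half-space computation above is self-contained and transparent.)
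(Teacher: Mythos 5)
Your proof is correct and ultimately rests on the same observation as the paper's: $E_T$ is the intersection over $(y,y^*)\in T$ of the half-spaces $\{x\in X : \inner{x-y}{y^*}\leq 0\}$, each convex and closed in the relevant weak topology. The detour through Lemma~\ref{lem:VTempty}, the polar, and Hahn--Banach is superfluous, since $V_T(x)=\emptyset$ means by definition that $\inner{x-y}{y^*}\leq 0$ for all $(y,y^*)\in T$, which gives that decomposition in one line.
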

\begin{proof}
Note that 
\[
E_T=\bigcap_{(y,y^*)\in T}\{x\in X\::\:\inner{x-y}{y^*}\leq 0\}.
\]
The proposition follows since each of the sets in the intersection is convex and weak$^*$-closed.
\end{proof}

\begin{proposition}\label{pro:new1}
Let $T:X\tos X^*$ be a quasimonotone operator. Then $E_{T^{\nu}}$ is either empty or a singleton.
\end{proposition}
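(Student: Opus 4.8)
The plan is to first translate the statement about $E_{T^{\nu}}$ into one about $T^{\nu\nu}$, and then exploit a self-duality trick. By Lemma~\ref{lem:VTempty} applied to the operator $T^{\nu}$, we have $x\in E_{T^{\nu}}$ if and only if $(T^{\nu})^{\nu}(x)=X^*$, so that
\[
E_{T^{\nu}}=\{x\in X\::\:T^{\nu\nu}(x)=X^*\}.
\]
Since $T$ is quasimonotone, $T\subset T^{\nu}$, and applying item {\it 4} of Proposition~\ref{pro:qpolar} with $S:=T^{\nu}$ gives $T^{\nu\nu}\subset T^{\nu}$; this is the only place where quasimonotonicity of $T$ is used.

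Next I would argue by contradiction. Suppose $x_1,x_2\in E_{T^{\nu}}$ with $x_1\neq x_2$. By the Hahn--Banach theorem there is $x^*\in X^*$ with $\inner{x_2-x_1}{x^*}>0$. Since $T^{\nu\nu}(x_1)=T^{\nu\nu}(x_2)=X^*$, both $(x_1,x^*)\in T^{\nu\nu}$ and $(x_2,-x^*)\in T^{\nu\nu}\subset T^{\nu}$. By the very definition of the quasimonotone polar (of $T^{\nu}$), the point $(x_1,x^*)$ must be in a quasimonotone relation with $(x_2,-x^*)$, that is,
\[
\min\{\inner{x_2-x_1}{x^*},\inner{x_1-x_2}{-x^*}\}\leq 0.
\]
But both entries of this minimum equal $\inner{x_2-x_1}{x^*}$, so $\inner{x_2-x_1}{x^*}\leq 0$, contradicting the choice of $x^*$. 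Hence $E_{T^{\nu}}$ cannot contain two distinct points, which is exactly the assertion.

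There is no serious obstacle here; the points to get right are the reduction and the choice of the companion point. The key trick is to pair $(x_1,x^*)$ with $(x_2,-x^*)$ rather than with an arbitrary element $(x_2,y^*)$: this forces the two terms of the minimum defining $\sim_q$ to coincide, collapsing the quasimonotone relation to a single linear inequality that Hahn--Banach can violate as soon as $x_1\neq x_2$. One should also keep in mind that the inclusion $T^{\nu\nu}\subset T^{\nu}$ genuinely needs $T$ to be quasimonotone — for a general operator only $T\subset T^{\nu\nu}$ is available — which is why the hypothesis appears in the statement.
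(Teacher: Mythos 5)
Your proof is correct; every step checks out: Lemma~\ref{lem:VTempty} applies to an arbitrary operator, so invoking it for $T^{\nu}$ to get $T^{\nu\nu}(x)=X^*$ for $x\in E_{T^{\nu}}$ is legitimate (you only use the easy direction), the inclusion $T^{\nu\nu}\subset T^{\nu}$ does follow from $T\subset T^{\nu}$ and antitonicity, and with the pair $(x_1,x^*)$, $(x_2,-x^*)$ both terms of the minimum in the relation $\sim_q$ indeed collapse to $\inner{x_2-x_1}{x^*}$, which Hahn--Banach makes positive. The paper's proof runs on the same underlying mechanism but with slightly different bookkeeping: instead of passing to $T^{\nu\nu}$, it applies Lemma~\ref{lem:VTempty} to $T$ itself, noting that $u\in E_{T^{\nu}}$ and $T\subset T^{\nu}$ force $V_T(u)\subset V_{T^{\nu}}(u)=\emptyset$, hence $T^{\nu}(u)=X^*$; then, since $V_{T^{\nu}}(v)=\emptyset$, the point $u$ cannot lie in $V_{T^{\nu}}(v)$, which reads $\inner{v-u}{u^*}\leq 0$ for every $u^*\in T^{\nu}(u)=X^*$ and gives $u=v$ directly. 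So the paper uses quasimonotonicity through $V_T\subset V_{T^{\nu}}$, while you use it through $T^{\nu\nu}\subset T^{\nu}$, and your antisymmetric pairing trick replaces the paper's quantification over all of $X^*$; the two closing steps are mirror images of the same separation argument (a nonzero vector admits a functional positive on it). Neither route is stronger or longer than the other, but yours has the small aesthetic advantage of staying entirely inside the polarity calculus, whereas the paper's stays closer to the sets $V$ and $E$ it has just introduced.
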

\begin{proof}
Assume that $E_{T^{\nu}}$ is non-empty and that there exist $u,v\in E_{T^{\nu}}$. In particular, $V_{T^{\nu}}(u)=\emptyset$ and, since $T$ is quasimonotone, $T\subset T^{\nu}$ and $V_T(u)=\emptyset$. Moreover, Lemma~\ref{lem:VTempty} now implies that $T^{\nu}(u)=X^*$.  On the other hand,  $u\notin V_{T^{\nu}}(v)=\emptyset$ and, therefore, $\inner{v-u}{u^*}\leq 0$, which in turn implies that $u=v$. Therefore $E_{T^{\nu}}$ is a singleton.
\end{proof}

\begin{proposition}\label{pro:new2}
Let $T:X\tos X^*$ be a pre-maximal quasimonotone operator. Then $E_T=E_{T^{\nu}}$.
\end{proposition}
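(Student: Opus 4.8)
The plan is to reduce the equality $E_T=E_{T^\nu}$ to the already-established characterization of pre-maximal quasimonotonicity, namely that a quasimonotone $T$ is pre-maximal quasimonotone if and only if $T^\nu=T^{\nu\nu}$.

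First I would record the inclusion $E_{T^\nu}\subset E_T$, which in fact holds for \emph{every} quasimonotone $T$ and does not use pre-maximality: since $T$ is quasimonotone we have $T\subset T^\nu$, hence $V_T(x)\subset V_{T^\nu}(x)$ for all $x\in X$, so $V_{T^\nu}(x)=\emptyset$ forces $V_T(x)=\emptyset$, that is, $E_{T^\nu}\subset E_T$. (This is the monotonicity of $E_{(\cdot)}$ noted just before the statement.)

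For the reverse inclusion I would translate both sets through Lemma~\ref{lem:VTempty}. Applying that lemma to $T$ gives $x\in E_T\iff V_T(x)=\emptyset\iff T^\nu(x)=X^*$; applying it to the operator $T^\nu$ gives $x\in E_{T^\nu}\iff V_{T^\nu}(x)=\emptyset\iff (T^\nu)^\nu(x)=T^{\nu\nu}(x)=X^*$. Hence
\[
E_T=\{x\in X\::\:T^\nu(x)=X^*\}\qquad\text{and}\qquad E_{T^\nu}=\{x\in X\::\:T^{\nu\nu}(x)=X^*\}.
\]
Now the hypothesis enters: $T$ pre-maximal quasimonotone is equivalent, by the preceding Proposition (equivalence of items \textit{1} and \textit{2}), to the graph equality $T^\nu=T^{\nu\nu}$, so $T^\nu(x)=T^{\nu\nu}(x)$ for every $x\in X$. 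Comparing the two displayed descriptions, the set of points at which $T^\nu$ equals $X^*$ coincides with the set at which $T^{\nu\nu}$ equals $X^*$, and therefore $E_T=E_{T^\nu}$.

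There is no genuine obstacle here: once Lemma~\ref{lem:VTempty} is in hand the argument is a direct substitution. The only point worth flagging is that the inclusion $E_T\subset E_{T^\nu}$ really does rely on pre-maximality (through $T^\nu=T^{\nu\nu}$); for an arbitrary quasimonotone operator one obtains only $E_{T^\nu}\subset E_T$, as in the first step, so the write-up should make explicit where the hypothesis is used.
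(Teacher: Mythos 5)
Your proof is correct. The inclusion $E_{T^\nu}\subset E_T$ is exactly as in the paper, but your argument for the reverse inclusion takes a different (and arguably cleaner) route: you apply Lemma~\ref{lem:VTempty} twice, to $T$ and to $T^\nu$, obtaining the descriptions $E_T=\{x\::\:T^\nu(x)=X^*\}$ and $E_{T^\nu}=\{x\::\:T^{\nu\nu}(x)=X^*\}$, and then conclude by invoking the equivalence ``$T$ pre-maximal quasimonotone $\iff T^\nu=T^{\nu\nu}$'' from the proposition in the preceding subsection. The paper instead uses the lemma only once (for $T$) and argues the hard direction directly: from $x\notin E_{T^\nu}$ it picks $(y,y^*)\in T^\nu$ with $\inner{x-y}{y^*}>0$, produces $x^*$ with $\inner{y-x}{x^*}>0$ via Hahn--Banach, and uses quasimonotonicity of $T^\nu$ to get $x^*\notin T^\nu(x)$, hence $T^\nu(x)\neq X^*$. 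The mathematical content is the same in the end (Hahn--Banach is hidden inside Lemma~\ref{lem:VTempty} in your version, and pre-maximality enters through the quasimonotonicity of $T^\nu$ in both), but your reduction makes the role of the identity $T^\nu=T^{\nu\nu}$ explicit and avoids repeating the separation argument, at the cost of leaning on a proposition the paper states without proof; the paper's version is self-contained at the level of first principles. Note also that your opening inclusion $E_{T^\nu}\subset E_T$ becomes redundant once the two displayed descriptions and $T^\nu=T^{\nu\nu}$ are in hand, since they already give the full equality.
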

\begin{proof}
Since $T$ is quasimonotone then $T\subset T^{\nu}$ and $E_{T^{\nu}}\subset E_T$.  Conversely, if $x\notin E_{T^{\nu}}$ then there exists $(y,y^*)\in T^{\nu}$ such that $\inner{x-y}{y^*}>0$ and, in particular, $x\neq y$. On the other hand, the Hahn-Banach theorem allows us to obtain $x^*\in X^*$ such that $\inner{y-x}{x^*}>0$. From the fact that $T^{\nu}$ is also quasimonotone, we conclude that $x^*\notin T^{\nu}(x)$, that is, $T^{\nu}(x)\neq X^*$ and, by by Lemma~\ref{lem:VTempty}, $x\notin E_T$. This proves the proposition.
\end{proof}

From the two previous propositions, we conclude this section with the following corollary.
\begin{corollary}\label{cor:new1}
If $T:X\tos X^*$ is pre-maximal quasimonotone, then $E_T$ is either empty or a singleton.
\end{corollary}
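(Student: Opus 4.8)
The plan is to derive Corollary~\ref{cor:new1} as an immediate consequence of Propositions~\ref{pro:new1} and~\ref{pro:new2}, which together already contain all the work. First I would recall the hypothesis: $T$ is pre-maximal quasimonotone, which by definition means $T$ is quasimonotone and $T^{\nu}$ is quasimonotone. In particular $T$ is quasimonotone, so Proposition~\ref{pro:new1} applies and tells us that $E_{T^{\nu}}$ is either empty or a singleton.

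Next I would invoke Proposition~\ref{pro:new2}, whose hypothesis (pre-maximal quasimonotonicity of $T$) is exactly what we are assuming, to conclude that $E_T=E_{T^{\nu}}$. Combining the two facts, $E_T=E_{T^{\nu}}$ and $E_{T^{\nu}}$ is empty or a singleton, gives that $E_T$ is empty or a singleton, which is the assertion. So the proof is essentially one line: ``By Proposition~\ref{pro:new2}, $E_T=E_{T^{\nu}}$, and by Proposition~\ref{pro:new1} the latter set is either empty or a singleton; hence so is $E_T$.''

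There is no real obstacle here, since the corollary is a formal corollary in the strict sense — the two propositions were evidently stated precisely so that this conclusion follows by composition. If anything, the only point worth a moment's care is making sure the hypotheses line up: Proposition~\ref{pro:new1} needs $T$ merely quasimonotone (true, since pre-maximal quasimonotone implies quasimonotone), and Proposition~\ref{pro:new2} needs the full pre-maximality hypothesis (true by assumption). I would also note in passing, for the reader's intuition, that the set $E_T$ records the points $x$ at which $V_T(x)=\emptyset$, equivalently (by Lemma~\ref{lem:VTempty}) the points where $T^{\nu}(x)=X^*$; pre-maximality forces this degenerate behavior to occur at most once.

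\begin{proof}
Since $T$ is pre-maximal quasimonotone, it is in particular quasimonotone, so Proposition~\ref{pro:new1} applies and yields that $E_{T^{\nu}}$ is either empty or a singleton. On the other hand, pre-maximal quasimonotonicity of $T$ is precisely the hypothesis of Proposition~\ref{pro:new2}, which gives $E_T=E_{T^{\nu}}$. Combining both facts, $E_T$ is either empty or a singleton.
\end{proof}
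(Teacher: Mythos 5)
Your proof is correct and matches the paper's intended argument exactly: the paper presents this corollary as a direct consequence of Propositions~\ref{pro:new1} and~\ref{pro:new2}, combined just as you do. Nothing to add.
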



\section{On Minty Variational Inequality Problems}
  
The \emph{Minty Variational Inequality Problem} (MVIP) associated to a multivalued operator $T:X\tos X^*$ and a set $K\subset X$ is
\begin{equation}
\text{find $x\in K$ such that $\inner{x-y}{y^*}\leq 0$, for all $(y,y^*)\in T$, $y\in K$.}\tag{MVIP}
\end{equation}

The set of solutions of (MVIP) will be denoted as $M(T,K)$. Clearly, $M(T,K)$ is convex and closed 
provided that $K$ is convex and closed.
In addition, is immediate to observe the following proposition.
\begin{proposition}\label{prop:minteqet}
Let $T:X\tos X^*$ and $K\subset X$. Then 
\[
M(T,K)=\{x\in K\::\: V_T(x)\cap K=\emptyset\}.
\]
In particular, $M(T,X)=E_T$.
\end{proposition}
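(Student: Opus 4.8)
The plan is to unwind both equalities directly from the definitions, since the statement is essentially a reformulation of what it means to solve (MVIP). First I would prove the set equality
\[
M(T,K)=\{x\in K\::\: V_T(x)\cap K=\emptyset\}
\]
by a double inclusion, though in fact both inclusions follow from a single chain of equivalences. Fix $x\in K$. By definition, $x\in M(T,K)$ iff $\inner{x-y}{y^*}\leq 0$ for every $(y,y^*)\in T$ with $y\in K$. Negating, $x\notin M(T,K)$ iff there exists $(y,y^*)\in T$ with $y\in K$ and $\inner{x-y}{y^*}>0$; but $\inner{x-y}{y^*}>0$ for some $y^*\in T(y)$ is precisely the statement $y\in V_T(x)$, so this says $V_T(x)\cap K\neq\emptyset$. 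Taking contrapositives gives $x\in M(T,K)$ iff $V_T(x)\cap K=\emptyset$, which is the claimed description.

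Next I would specialize to $K=X$. Then the condition $V_T(x)\cap X=\emptyset$ is just $V_T(x)=\emptyset$ (since $V_T(x)\subset X$ always), and the constraint $x\in K$ becomes vacuous. Hence
\[
M(T,X)=\{x\in X\::\: V_T(x)=\emptyset\}=E_T,
\]
the last equality being the definition of $E_T$ given just before this section. This completes the proof.

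I do not expect any real obstacle here: the proposition is a bookkeeping lemma whose entire content is spelling out the quantifiers in (MVIP) and matching them against the already-introduced notation $V_T$ and $E_T$. The only point requiring a word of care is the direction of the inequality — (MVIP) uses $\inner{x-y}{y^*}\leq 0$ while $V_T(x)$ is defined via the strict reverse inequality $\inner{x-y}{y^*}>0$ — so the argument is naturally phrased through the negation/contrapositive, as above, rather than by trying to match the two conditions verbatim. Everything else is immediate from the definitions.
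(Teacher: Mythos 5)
Your proof is correct and is exactly the definitional unwinding the paper has in mind — the paper states the proposition as an immediate observation without writing out a proof, and your quantifier negation plus the specialization $K=X$ supplies precisely that routine verification.
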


\begin{lemma}\label{inclu-Minty}
Let $T,S:X\tos X^*$ be multivalued operators and $K\subset X$. If $T\subset S$ then $M(S,K)\subset M(T,K)$.
\end{lemma}

\begin{lemma}\label{lem:new}
Let $R,S:X\tos X^*$ be multivalued operators such that $S\subset R^{\nu}$. If $M(S,K)\subset M(R,K)$, for all $K\subset X$, then $R$ is quasimonotone.
\end{lemma}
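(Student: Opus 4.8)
The plan is to argue by contradiction. Suppose $R$ is not quasimonotone, so there are $(a,a^*),(b,b^*)\in R$ with $\inner{b-a}{a^*}>0$ and $\inner{a-b}{b^*}>0$ (in particular $a\neq b$). The crucial move is to feed the hypothesis the two-point set $K=\{a,b\}$. First I would check that $M(R,K)=\emptyset$: by Proposition~\ref{prop:minteqet}, $M(R,K)=\{z\in K:V_R(z)\cap K=\emptyset\}$, and since $(b,b^*)\in R$ with $\inner{a-b}{b^*}>0$ we have $b\in V_R(a)$, so $a\notin M(R,K)$; symmetrically $(a,a^*)\in R$ with $\inner{b-a}{a^*}>0$ gives $a\in V_R(b)$, so $b\notin M(R,K)$. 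Hence $M(R,K)=\emptyset$, and by the assumed inclusion $M(S,K)\subset M(R,K)$ we get $M(S,K)=\emptyset$.

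Next I would derive a contradiction by showing that, on the contrary, $a\in M(S,K)$. Again by Proposition~\ref{prop:minteqet} it suffices to verify $V_S(a)\cap K=\emptyset$; since $z\notin V_S(z)$ for every $z$, this reduces to showing $b\notin V_S(a)$. Assume instead $b\in V_S(a)$, so there is $w^*\in S(b)$ with $\inner{a-b}{w^*}>0$. But $(b,w^*)\in S\subset R^\nu$ and $(a,a^*)\in R$, hence $(b,w^*)\sim_q(a,a^*)$, i.e.\ $\min\{\inner{a-b}{w^*},\inner{b-a}{a^*}\}\leq 0$; combined with $\inner{a-b}{w^*}>0$, this forces $\inner{b-a}{a^*}\leq 0$, contradicting $\inner{b-a}{a^*}>0$. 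Therefore $b\notin V_S(a)$, so $a\in M(S,K)$, contradicting $M(S,K)=\emptyset$. This completes the argument and $R$ must be quasimonotone.

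The only genuine idea here is the choice of $K$ as a two-point set: the inclusion $S\subset R^\nu$ is precisely what turns a ``bad pair'' of $R$ into a usable constraint on points of $S$ over that $K$, forcing $M(S,K)$ to be nonempty while $M(R,K)$ is empty. The rest is unwinding the definitions of $V_T$, $M(\cdot,\cdot)$ and $\sim_q$, so I do not anticipate a serious obstacle; the one point to keep in mind is that $V_T(z)$ never contains $z$ itself, which is what collapses the two-point computation to a single containment check.
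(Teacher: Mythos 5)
Your proof is correct and follows essentially the same route as the paper: assume a non-quasimonotone pair in $R$, take the two-point set $K$, note $M(R,K)=\emptyset$, and use $S\subset R^{\nu}$ to force one of the two points into $M(S,K)$, contradicting $M(S,K)\subset M(R,K)$. The only cosmetic difference is which of the two points you certify as a Minty solution for $S$ (the paper uses the other one, by the symmetric inequality), so there is nothing to change.
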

\begin{proof}
Assume that $R$ is not quasimonotone, so there exist $(u,u^*),(v,v^*)\in R$ such that 
\[
\inner{v-u}{u^*}>0\quad\text{and}\quad\inner{u-v}{v^*}>0.
\]
This implies that $u,v\notin M(R,\{u,v\})$. On the other hand, if $w^*\in S(u)\subset R^{\nu}(u)$ then, in particular, $(u,w^*)\sim_q(v,v^*)$, that is,
\[
\min\{\inner{u-v}{v^*},\inner{v-u}{w^*}\}\leq 0,
\]
which implies $\inner{v-u}{w^*}\leq 0$.  Since $\inner{v-v}{z^*}=0\leq 0$, for every $z^*\in S(v)$, we conclude that $v\in M(S,\{u,v\})\subset M(R,\{u,v\})$, a contradiction. The lemma follows.
\end{proof}

\begin{proposition}\label{prop:tnumint}
Let $T:X\tos X^*$ be an operator. Then $T$ is quasimonotone if, and only if, $M(T^\nu,K)\subset M(T,K)$, for every $K\subset X$. 
\end{proposition}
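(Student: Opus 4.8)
The plan is to prove both directions separately, with the forward direction being essentially a direct application of earlier results and the reverse direction requiring a short argument.

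For the forward direction, suppose $T$ is quasimonotone. By the characterization of quasimonotonicity in terms of the polar (the proposition stating that $T$ quasimonotone $\iff T\subset T^{\nu}$), we have $T\subset T^{\nu}$. Now I would invoke Lemma~\ref{inclu-Minty} with the inclusion $T\subset T^{\nu}$ to conclude immediately that $M(T^{\nu},K)\subset M(T,K)$ for every $K\subset X$. This direction is essentially free.

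For the reverse direction, suppose $M(T^{\nu},K)\subset M(T,K)$ for every $K\subset X$. I would like to apply Lemma~\ref{lem:new} with $R=T$ and $S=T^{\nu}$: its hypothesis requires $S\subset R^{\nu}$, i.e.\ $T^{\nu}\subset T^{\nu\nu}$, which holds by item~2 of Proposition~\ref{pro:qpolar} (namely $A\subset A^{\nu\nu}$ applied to $A=T^{\nu}$). The other hypothesis of Lemma~\ref{lem:new} is precisely $M(S,K)\subset M(R,K)$ for all $K$, which is our assumption $M(T^{\nu},K)\subset M(T,K)$. Hence Lemma~\ref{lem:new} yields that $R=T$ is quasimonotone, which is exactly what we want.

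Since both directions reduce to checking the hypotheses of lemmas already established, there is no real obstacle here; the only point requiring a moment's care is matching the variables in Lemma~\ref{lem:new} correctly, in particular verifying that $T^{\nu}\subset T^{\nu\nu}$ so that the lemma applies with $S=T^{\nu}$ and $R=T$.
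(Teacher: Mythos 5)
Your proof follows the paper's argument exactly: the forward direction is $T\subset T^{\nu}$ together with Lemma~\ref{inclu-Minty}, and the converse is Lemma~\ref{lem:new} applied with $R=T$ and $S=T^{\nu}$. One slip in your hypothesis check, though: with $R=T$ the condition $S\subset R^{\nu}$ reads $T^{\nu}\subset T^{\nu}$, which is trivially true; it is \emph{not} $T^{\nu}\subset T^{\nu\nu}$. That latter inclusion is false in general --- it asserts that $T^{\nu}$ is quasimonotone, i.e.\ pre-maximality when $T$ is quasimonotone --- and the justification you cite, Proposition~\ref{pro:qpolar} item~2 applied to $A=T^{\nu}$, only yields $T^{\nu}\subset T^{\nu\nu\nu}=T^{\nu}$. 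The error is harmless here because the hypothesis actually required by Lemma~\ref{lem:new} holds trivially, so the proof stands once that line is corrected.
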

\begin{proof}
If $T$ is quasimonotone then $T\subset T^\nu$ and by, Lemma~\ref{inclu-Minty}, we obtain $M(T^\nu,K)\subset M(T,K)$.  The converse implication follows by taking $R=T$ and $S=T^{\nu}$ in Lemma~\ref{lem:new}.
\end{proof}

\begin{corollary}
Let $T:X\tos X^*$ be a multivalued operator. If $M(T^\nu,K)= M(T,K)$, for every $K\subset X$ then $T$ is pre-maximal quasimonotone.
\end{corollary}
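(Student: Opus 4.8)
The plan is to combine the previous proposition with the characterization of pre-maximal quasimonotonicity via the quasimonotone polar. The hypothesis $M(T^\nu,K)=M(T,K)$ for every $K\subset X$ gives in particular the inclusion $M(T^\nu,K)\subset M(T,K)$, so by Proposition~\ref{prop:tnumint} the operator $T$ is quasimonotone. It remains to show that $T^\nu$ is quasimonotone, since by definition a quasimonotone $T$ with $T^\nu$ quasimonotone is pre-maximal quasimonotone.

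To prove $T^\nu$ is quasimonotone I would apply Lemma~\ref{lem:new} with $R=T^\nu$ and $S=T^{\nu\nu}$. This is legitimate because $S=T^{\nu\nu}=(T^\nu)^{\nu}=R^\nu$, so the hypothesis $S\subset R^\nu$ of the lemma holds trivially (indeed with equality). The remaining hypothesis to check is that $M(T^{\nu\nu},K)\subset M(T^\nu,K)$ for every $K\subset X$. Since $T$ is quasimonotone we have $T\subset T^\nu$, hence by Lemma~\ref{inclu-Minty} $M(T^\nu,K)\subset M(T,K)$; combined with the hypothesis $M(T,K)=M(T^\nu,K)$ this forces $M(T^\nu,K)=M(T,K)$ already, but what I actually need is one level up. Applying the inclusion $T^\nu\subset T^{\nu\nu}$ (Proposition~\ref{pro:qpolar}, item 2) and Lemma~\ref{inclu-Minty} gives $M(T^{\nu\nu},K)\subset M(T^\nu,K)$ for every $K$, which is exactly the hypothesis needed. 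Therefore Lemma~\ref{lem:new} applies and yields that $R=T^\nu$ is quasimonotone.

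Putting these together: $T$ is quasimonotone and $T^\nu$ is quasimonotone, so $T$ is pre-maximal quasimonotone by definition. The main thing to be careful about is matching the indices correctly when invoking Lemma~\ref{lem:new} — the lemma is stated for a pair $(R,S)$ with $S\subset R^\nu$ and hypothesis $M(S,\cdot)\subset M(R,\cdot)$, and one must recognize that $(T^\nu,T^{\nu\nu})$ is exactly such a pair, with the Minty-set inclusion coming for free from $T^\nu\subset T^{\nu\nu}$ and Lemma~\ref{inclu-Minty}. No delicate estimates are involved; the only potential pitfall is conflating the two inclusions $M(T^{\nu\nu},K)\subset M(T^\nu,K)$ and $M(T^\nu,K)\subset M(T,K)$, which play different roles.
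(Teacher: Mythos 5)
There is a genuine gap in the second half of your argument. To feed Lemma~\ref{lem:new} with the pair $(R,S)=(T^\nu,T^{\nu\nu})$ you need $M(T^{\nu\nu},K)\subset M(T^\nu,K)$, and you derive this from the inclusion $T^\nu\subset T^{\nu\nu}$, citing Proposition~\ref{pro:qpolar}, item~2. But that item gives $T\subset T^{\nu\nu}$, not $T^\nu\subset T^{\nu\nu}$. Since $T^{\nu\nu}=(T^\nu)^\nu$, the inclusion $T^\nu\subset T^{\nu\nu}$ is, by the characterization of quasimonotonicity via the polar, exactly the statement that $T^\nu$ is quasimonotone --- which is the conclusion you are trying to prove, so the step is circular. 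It is also false in general: for example $T=\{(0,1)\}\subset\R\times\R$ is quasimonotone, yet $T^\nu$ contains both $(-1,-1)$ and $(-2,1)$, which are not quasimonotonically related, so $T^\nu\not\subset T^{\nu\nu}$. Note, too, that after Proposition~\ref{prop:tnumint} your argument never uses the hypothesis $M(T^\nu,K)=M(T,K)$ again; if it were correct, it would show that every quasimonotone operator is pre-maximal quasimonotone, which the example above refutes.

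The fix is cheap, and this is where the paper goes a different (simpler) way: apply Lemma~\ref{lem:new} with $R=T^\nu$ and $S=T$. Then $S\subset R^\nu$ reads $T\subset T^{\nu\nu}$, which is the genuine item~2 of Proposition~\ref{pro:qpolar}, and the Minty hypothesis $M(T,K)\subset M(T^\nu,K)$ is precisely the nontrivial direction of your assumed equality. Alternatively, your choice $S=T^{\nu\nu}$ can be salvaged: from $T\subset T^{\nu\nu}$ and Lemma~\ref{inclu-Minty} you get $M(T^{\nu\nu},K)\subset M(T,K)$, and then the hypothesis $M(T,K)=M(T^\nu,K)$ yields $M(T^{\nu\nu},K)\subset M(T^\nu,K)$; the point is that this inclusion must come through the assumed equality, not from a polar inclusion that holds only when the conclusion is already true.
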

\begin{proof}
By Proposition~\ref{prop:tnumint}, $T$ is quasimonotone.  Noting that $T\subset T^{\nu\nu}$ we can choose $R=T^{\nu}$ and $S=T$ in Lemma~\ref{lem:new}, to conclude that $T^{\nu}$ is quasimonotone as well. Therefore $T$ is pre-maximal monotone.
\end{proof}

The converse of the previous corollary is not true. Consider for example $T:\R\tos\R$ defined as
\[
T(x)=\begin{cases}\langle-\infty,0],&\text{if }x<0\\\R,&\text{if }x=0,\\\{0\},&\text{if }x=0.\end{cases}
\]
Is straightforward to prove that $T^{\nu}=\{(x,x^*)\in\R^2\::\:xx^*\geq 0\}$, which is quasimonotone. Therefore $T$ is pre-maximal quasimonotone. Let $K=[1,2]$. Then $M(T,K)=[1,2]$ and $M(T^{\nu},K)=\{1\}$.

We now use Proposition~\ref{prop:minteqet} to restate Propositions~\ref{pro:new1} and \ref{pro:new2}, and Corollary~\ref{cor:new1}, in our current setting. 
\begin{proposition}
Let $T:X\tos X^*$ be a quasimonotone operator. Then $M(T^{\nu},X)$ is either empty or a singleton. If, in addition, $T$ is pre-maximal quasimonotone, then $M(T,X)=M(T^{\nu},X)$ and $M(T,X)$ is either empty or a singleton.
\end{proposition}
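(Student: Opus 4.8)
The statement is a direct translation of the earlier results on the set $E_T$ into the language of Minty solution sets, the bridge being Proposition~\ref{prop:minteqet}, which asserts $M(T,X)=E_T$ for any operator $T$. So the plan is simply to substitute $E_T$ by $M(T,X)$ (and $E_{T^{\nu}}$ by $M(T^{\nu},X)$) throughout Propositions~\ref{pro:new1} and~\ref{pro:new2} and Corollary~\ref{cor:new1}. No genuinely new argument is needed.

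First I would handle the unconditional part. By Proposition~\ref{prop:minteqet} applied to the operator $T^{\nu}$ with $K=X$, we get $M(T^{\nu},X)=E_{T^{\nu}}$. Since $T$ is assumed quasimonotone, Proposition~\ref{pro:new1} tells us that $E_{T^{\nu}}$ is either empty or a singleton, and hence so is $M(T^{\nu},X)$.

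Next I would add the pre-maximality hypothesis. Proposition~\ref{prop:minteqet} also gives $M(T,X)=E_T$. When $T$ is pre-maximal quasimonotone, Proposition~\ref{pro:new2} yields $E_T=E_{T^{\nu}}$, so combining the three equalities we obtain $M(T,X)=E_T=E_{T^{\nu}}=M(T^{\nu},X)$. Finally, that $M(T,X)$ is empty or a singleton now follows either directly from the chain of equalities together with the unconditional part already proved, or equivalently by invoking Corollary~\ref{cor:new1} (which states $E_T$ is empty or a singleton for pre-maximal quasimonotone $T$) together with $M(T,X)=E_T$.

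There is essentially no obstacle here: all the analytic content — the Hahn--Banach separation argument showing two distinct points of $E_{T^{\nu}}$ force a contradiction, and the quasimonotonicity of $T^{\nu}$ used to compare $E_T$ with $E_{T^{\nu}}$ — was already carried out in the proofs of Propositions~\ref{pro:new1} and~\ref{pro:new2}. The only point worth stating carefully is that Proposition~\ref{pro:new1} requires merely $T$ quasimonotone (not $T^{\nu}$ quasimonotone), so it applies under the hypotheses of the first assertion, while the identity $M(T,X)=M(T^{\nu},X)$ genuinely needs pre-maximality, as witnessed by the $\R\tos\R$ example given just before the statement, where the two solution sets differ on $K=X$ as well once one leaves the pre-maximal setting.
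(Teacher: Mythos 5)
Your proof is correct and is exactly the paper's route: the paper simply restates Propositions~\ref{pro:new1} and~\ref{pro:new2} and Corollary~\ref{cor:new1} via the identity $M(\cdot,X)=E_{(\cdot)}$ from Proposition~\ref{prop:minteqet}, which is what you do. (Your closing aside misreads the $\R\tos\R$ example --- there the operator \emph{is} pre-maximal and the solution sets differ only for $K=[1,2]$, not $K=X$ --- but this is immaterial to the proof.)
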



\end{document}